\newcommand{\Pf}{{\em Proof}. }
\newcommand{\EPf}{\hfill$\square$}
\newcommand\fieldsetc{\mathbb}
\newcommand{\R}{\fieldsetc{R}}
\newtheorem{thm}{Theorem}[section]
\newtheorem{cor}{Corollary}[section]
\newtheorem{prop}{Proposition}[section]
\newtheorem{lem}{Lemma}[section]
\theoremstyle{remark}
\newtheorem{rmk}{Remark}[section]
\newtheorem{quest}{Question}[section]
\newcommand{\D}{\mbox{$\mathcal D$}}
\newcommand{\N}{\mbox{$\mathcal N$}}
\title{The $\kappa$-nullity of Riemannian manifolds and\\their
  splitting tensors}
\author{Claudio Gorodski}\thanks{The first named author
  has been partially supported by the grant number 302882/2017-0
  from the \emph{National Council for
  Scientific and Technological Development} (CNPq, Brazil)
  and the project number 2016/23746-6 from the \emph{S\~ao Paulo
  Research Foundation}
  (Fapesp, Brazil).}
\address{Instituto de Matem\'atica e Estat\'\i stica, Universidade de
  S\~ao Paulo, Rua do Mat\~ao, 1010, S\~ao Pulo, SP 05508-090, Brazil.}
\email{gorodski@ime.usp.br}
\author{Felippe Guimar\~aes}\thanks{The second named author
has been supported by the post-doctoral grant 2019/19494-0
from the \emph{S\~ao Paulo Research Foundation}
  (Fapesp, Brazil).}
\address{Instituto de Matem\'atica e Estat\'\i stica, Universidade de
  S\~ao Paulo, Rua do Mat\~ao, 1010, S\~ao Pulo, SP 05508-090, Brazil.}
\email{felippe@impa.br}
\date{\today}
\subjclass[2010]{53C20 (Primary); 53C25, 53C30, 22E25 (Secondary)}
\begin{document}

	\maketitle

        \begin{abstract}
  We consider Riemannian $n$-manifolds $M$ with nontrivial $\kappa$-nullity
  ``distribution'' of the curvature tensor~$R$, namely,
  the variable rank distribution of tangent subspaces to $M$ where $R$
  coincides with the curvature
  tensor of a space of constant curvature~$\kappa$
  ($\kappa\in\R$) is nontrivial. We obtain classification
  theorems under diferent additional assumptions, in terms of
  low nullity/conullity, controlled scalar curvature
  or existence of quotients of finite volume.
  We prove new results, but also revisit previous ones.
  \end{abstract}

  \section{Introduction}

Several important classes of Riemannian manifolds $M$ are defined
by imposing a certain condition on its Riemann curvature tensor $R$,
such as spaces of constant curvature, Einstein manifolds,
locally symmetric spaces, etc. In a somehow different sense, 
it is a stimulating problem to define a class of Riemannian manifolds
by imposing a certain
\emph{form} on their curvature tensors. More specifically,
let $T$ be an algebraic curvature tensor. A Riemannian manifold $M$ is said
to be \emph{modelled} on~$T$ if its
curvature tensor is, at each point, orthogonally equivalent to~$T$.
Here the size of the orbit of $T$ under the action of the orthogonal group
plays a certain role; for instance, curvature tensors
of spaces of constant curvature are fixed points of that action,
and in this case a manifold modelled on $T$ will obviously also have
constant curvature. On the other hand, if $T$ is only required to be the
curvature tensor of a homogeneous Riemannian manifold $\bar M$,
there are continuous families
of examples of complete irreducible Riemannian manifolds $M$ modelled on $T$
which are not locally isometric to~$\bar M$ (see e.g.~\cite{KTV}
for examples and a discussion of related results, which originate
from a question of Gromov).  

If a Riemannian manifold $M$ is modelled on an algebraic
curvature tensor $T$, then clearly it is also
\emph{curvature homogeneous}, in the sense
that the curvature tensors at any two of its points are orthogonally 
equivalent. The totality of curvature homogeneous manifolds
(for varying~$T$) obviously include locally homogeneous spaces,
but contains strictly more manifolds. The first examples 
were constructed
by Takagi~\cite{takagi2} and Sekigawa~\cite{sekigawa}, 
in response to a question by Singer
(these were later generalized, see~\cite{BKV} for the full range of
generalizations).

In a different vein, a Riemannian manifold is called
\emph{semi-symmetric} if its curvature tensor is, at each point,
orthogonally equivalent to the curvature tensor of a symmetric space; the
symmetric space may depend on the point (in particular
a curvature homogeneous semi-symmetric space is a
Riemannian manifold modelled on the curvature tensor of a
fixed symmetric space). In 1968, Nomizu conjectured that 
every complete irreducible semi-symmetric 
space  of dimension greater than or equal to three
would be locally symmetric.
His conjecture was refuted by Takagi~\cite{takagi} and
Sekigawa~\cite{sekigawa2}, who constructed
counterexamples (see~\cite{BKV} for further developments).
The complete classification of
semi-symmetric spaces is the work of Z. I. Szab\'o~\cite{szabo}. 
On the other hand, Florit and Ziller
have shown that the Nomizu conjecture holds for manifolds of finite
volume~\cite{FZ}.

It is remarkable what all of the examples above (and others)
have in common, namely, their curvature tensor has a large nullity. 
This leads us to the 
class of Riemannian manifolds
that we consider herein; loosely speaking, we say
a Riemannian manifold has \emph{non-trivial
  $\kappa$-nullity}, where $\kappa\in\R$, if the variable
rank tangent distribution where its curvature tensor behaves like
that of a space of constant curvature $\kappa$ is non-trivial
(as an extrinsic counterpart to the above examples,
recall that, owing to the Beez-Killing theorem,
a locally deformable hypersurface in a space form of curvature~$\kappa$,
without isotropic points,
has precisely two nonzero principal curvatures at each point, and hence
has a $\kappa$-nullity distribution of codimension~$2$).

The idea of nullity
was introduced in case $\kappa=0$
by Chern and Kuiper in~\cite{ChernKuiperNull}, and for
general $\kappa$ by Otsuki~\cite{O}, and later
reformulated and studied by different authors
(see e.g.~\cite{GrayNull,maltzCompl} and, for more recent
work, \cite{FZ,DOV,DOV2} and the references therein). 
Each sign of $\kappa$ (positive, negative or zero) yields
results of a different flavor. In this paper we consider 
the three cases, and note that the concept of
nullity has connections
with diverse areas such as Sasakian manifolds, solvmanifolds,
and non-holonomic geometry. Our main tool is the so called
\emph{splitting tensor} (cf.~section~\ref{prelim}). 
We prove new results, but we also aim 
to extend, unify and simplify existing results in the literature. 

More precisely, let $M$ be a connected 
Riemannian manifold, and consider the curvature
tensor $R$ of its Levi-Civit\`a connection $\nabla$ with the sign
convention
\[ R(X,Y)Z=\nabla_X\nabla_YZ-\nabla_Y\nabla_XZ-\nabla_{[X,Y]}Z, \]
for vector fields $X$, $Y$, $Z\in\Gamma(TM)$. 
For $\kappa\in\R$, the
\emph{$\kappa$-nullitty distribution} of $M$ is the
variable rank
distribution
$\N_\kappa$ on $M$ defined for each $p\in M$ by
\[ \N_\kappa|_p=\{z\in T_pM:R_p(x,y)z=-\kappa(\langle x,z\rangle_p y-
  \langle y,z\rangle_p x)\quad\mbox{for all $x$, $y\in T_pM$}\}. \]
The number $\nu_\kappa(p):=\dim \N_\kappa|_p$ is called the
\emph{index of $\kappa$-nullity} at~$p$.

In case $\kappa=0$ we obtain trivial examples of manifolds
with positive $\nu_0$ simply by taking a Riemannian product
with an Euclidean space, but similar product
examples do not occur if $\kappa\neq0$.  
It is easily seen that
$\nu_\kappa(p)$ is nonzero for at most one value of $\kappa$.
For general~$M$, $\nu_\kappa$ is nonnecessarily constant if nonzero,
but it is an upper semicontinuous function, so there is an open
and dense set of $M$ where $\nu_\kappa$ is locally constant,
and there is
an open subset $\Omega$ of $M$ where $\nu_\kappa$ attains its minimum
value. It is known that $\N_\kappa$ is an autoparallel distribution
on any open set where $\nu_\kappa$ is locally constant  and,
in case $M$ is a complete Riemannian manifold,
its leaves in $\Omega$ are \emph{complete} totally
geodesic submanifolds of constant curvature $\kappa$~\cite{maltzCompl}.

We call the orthogonal complement of $\mathcal N_\kappa$ the 
\emph{$\kappa$-conullity distribution} of $M$, and its
dimension at a point~$p\in M$ the \emph{index of $\kappa$-conullity} 
at~$p$, or simply, the $\kappa$-conullity at~$p$. For obvious
reasons, the minimal nonzero value of the $\kappa$-conullity is~$2$.
Riemannian manifolds with $0$-conullity at most $2$
have pointwise the
curvature tensor of an isometric product of Euclidean space 
with a surface with constant curvature and hence are semi-symmetric.
Conversely, a complete irreducible
semi-symmetric space is either locally symmetric or
has $0$-conullity at most $2$ in an open and dense subset~\cite{szabo}. 

In our study we apply a homothety and assume that $\kappa$ is equal to
$+1$, $-1$ or~$0$. Generally speaking,
the results below give characterizations/classifications
of manifolds in terms of low conullity/nullity, controlled scalar
curvature and/or existence of quotients of finite volume.
Some terminology: in general we shall say an $n$-manifold has
\emph{minimal} $\kappa$-nullity~$d$ (resp.~\emph{maximal}
$\kappa$-conullity $n-d$) to mean that $\nu_\kappa\geq d$
everywhere and the equality holds at some point. 

\subsection{Results with $\kappa=+1$}

The following theorem gives a lot of rigidity in the case
of constant $(+1)$-conullity~$2$ and constant scalar curvature.
It should be compared with the examples constructed
in~\cite{schmidt-wolfson-cvc1} of certain inhomogeneous 
conformal deformations of left-invariant metrics on~$SU(2)$,
$\widetilde{SL(2,\R)}$, and $Nil^3$,
which posses $(+1)$-conullity $2$ and nonconstant scalar curvature.

\begin{thm}\label{kappa1}
  Let $M$ be a simply-connected complete Riemannian $n$-manifold with
  constant
  $(+1)$-conullity equal to~$2$, and constant scalar
  curvature. Then $M$ is a
  $3$-dimensional Sasakian space form, that is, isometric to one
  of the Lie groups $SU(2)$ (the Berger sphere),
  $\widetilde{SL(2,\R)}$ (the universal covering of the
  unit tangent bundle of the real hyperbolic space),
  or $Nil^3$ (the Heisenberg group).
  In all cases, the $(+1)$-nullity distribution
  is orthogonal to the contact distribution.
\end{thm}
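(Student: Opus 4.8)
The plan is to first use the constancy of the conullity to obtain a well-behaved nullity foliation, then force the completeness of its leaves through the Riccati equation for the splitting tensor to bound the dimension by~$3$, and finally invoke the constant scalar curvature to rigidify the resulting $3$-manifold into one of the three model geometries. Since the $(+1)$-conullity is constantly~$2$, the index $\nu_1=n-2$ is constant, so $\N_1$ is autoparallel on all of $M$ and $\Omega=M$; by the cited result \cite{maltzCompl} its leaves are complete totally geodesic submanifolds of constant curvature $+1$, and in particular every nullity geodesic $\gamma$ is defined on all of $\R$. I would then bring in the splitting tensor $C\colon\N_1\to\operatorname{End}(\N_1^\perp)$ from section~\ref{prelim}; along a unit-speed nullity geodesic $\gamma$ the operator $C(t):=C_{\gamma'(t)}$ acting on the parallel-transported $2$-dimensional conullity satisfies a Riccati equation which, as the leaves have curvature $+1$, takes the form $C'=C^2+\operatorname{Id}$ (up to the sign convention chosen for $C$).

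The crucial observation is that completeness of $\gamma$ forces $C_{\gamma'(0)}$ to have no real eigenvalue. Linearizing the Riccati equation by $C=-Y'Y^{-1}$ with $Y''+Y=0$ and $Y(0)=\operatorname{Id}$ gives $Y(t)=\cos t\,\operatorname{Id}-\sin t\,C_T$, where $T=\gamma'(0)$; since $C(t)$ is a smooth, hence finite, tensor for all $t\in\R$, the matrix $Y(t)$ must be invertible for all $t$. But $\det(\cos t\,\operatorname{Id}-\sin t\,C_T)=0$ for some $t$ precisely when $C_T$ has a real eigenvalue (namely $\cot t$, which runs over all of $\R$). Hence at every point and for every unit $T\in\N_1$ the $2\times2$ matrix $C_T$ has a non-real conjugate pair of eigenvalues; in particular $C_T\neq0$, so the tensorial map $T\mapsto C_T$ is injective.

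This yields the dimension bound. By tensoriality $\{C_T:T\in\N_1|_p\}$ is a linear subspace of $\mathfrak{gl}(2,\R)$ of dimension $\nu_1=n-2$, every nonzero element of which has non-real eigenvalues. Writing $M=\bigl(\begin{smallmatrix}a&b\\ c&d\end{smallmatrix}\bigr)$, the condition of having non-real eigenvalues reads $(a-d)^2+4bc<0$, so the relevant object is the quadratic form $Q(M)=(a-d)^2+4bc$ on $\mathfrak{gl}(2,\R)\cong\R^4$; it has two positive, one negative and one zero eigenvalue, whence any subspace on which $Q$ is negative definite is at most one-dimensional. Therefore $n-2\le1$, i.e. $n=3$ and $\N_1$ is a line field which, being a trivial bundle over the simply-connected $M$, is spanned by a global unit field $\xi$.

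It remains to identify $M^3$, and here lies the principal obstacle. Splitting an orthonormal frame into $\xi$ and the conullity plane and using the nullity identities $R(x,\xi)\xi=x$ for $x\perp\xi$ gives $\operatorname{scal}=4+2K$, where $K$ is the sectional curvature of the conullity plane, so constant scalar curvature is equivalent to constant~$K$. The hard part is to promote this to rigidity: combining the Riccati equation with the Codazzi equation for the splitting tensor, one must show that constant $K$ forces $\xi$ to be a Killing field and $C_\xi$ to be a parallel skew-symmetric endomorphism of the conullity, the normalization being fixed by the Riccati relation. This endows $M$ with a Sasakian structure whose Reeb field is $\xi$, whose contact distribution is the conullity, and which has constant $\phi$-sectional curvature; the classification of complete simply-connected $3$-dimensional Sasakian space forms then produces exactly the three homogeneous models $\SU{2}$, $\widetilde{SL(2,\R)}$ and $Nil^3$, the trichotomy being governed by the sign of $K$ relative to its critical value. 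By construction $\N_1=\R\xi$ is orthogonal to the contact distribution, giving the last assertion.
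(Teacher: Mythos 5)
Your first half is correct, and it takes a genuinely different route from the paper's. The absence of real eigenvalues of $C_T$ is obtained exactly as in Proposition~\ref{eqn-c} and Corollary~\ref{real-evs}, but where the paper invokes Ferus' vector-fields-on-spheres count ($2=\rho(2)\geq (n-2)+1$), you observe that $T\mapsto C_T$ embeds $\N_1|_p$ linearly into $\mathfrak{gl}(2,\R)$ with image on which the discriminant form $(a-d)^2+4bc$ is negative definite, and that this form has negative index~$1$. For conullity~$2$ this is more elementary and self-contained than the Radon--Hurwitz argument, and it gives the same conclusion $n=3$.

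The second half has a genuine gap, and it sits exactly where you planted the flag. Nothing in your proposal proves that constant scalar curvature forces $C_\xi$ to be skew-symmetric, i.e.\ $\xi$ to be Killing, which is what would make $M$ K-contact and hence Sasakian in dimension~$3$; everything after ``This endows $M$ with a Sasakian structure'' is conditional on that unproved claim. Moreover, the claim cannot be established with the tools you list. Along a unit-speed nullity geodesic the available information is: $\mathrm{tr}\,C_\xi=0$ (which requires constant scalar curvature via Lemma~\ref{lem:TraceSplitting} --- your sketch never even derives this), $\det C_\xi=1$ (from the trace of \eqref{eq:stde}), absence of real eigenvalues, and parallelism of $C_\xi$ along the geodesic. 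In a suitable orthonormal basis these facts only force
\[ C_\xi=\begin{pmatrix} s & -a\\ a & -s \end{pmatrix},\qquad a^2-s^2=1, \]
and skewness is the extra condition $s=0$, which does not follow. Indeed it fails for metrics satisfying all hypotheses of the theorem: for the left-invariant metrics on $SU(2)$ with Milnor data $\lambda_1=\theta+1/\theta$, $\lambda_2=\theta$, $\lambda_3=1/\theta$ --- row~1 of Table~\ref{table1} --- the nullity is spanned by $e_1$, the scalar curvature is the constant~$2$, the frame $(e_2,e_3)$ is parallel along nullity geodesics (since $\lambda_2+\lambda_3=\lambda_1$ gives $\nabla_{e_1}e_2=\nabla_{e_1}e_3=0$), and a Koszul computation gives $C_{e_1}=\left(\begin{smallmatrix}0&-\theta\\ 1/\theta&0\end{smallmatrix}\right)$, which is not skew for $\theta\neq1$; correspondingly $e_1$ is not Killing there, because in a Milnor frame $e_1$ is Killing precisely when $\lambda_2=\lambda_3$. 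So ``constant $K$ forces $\xi$ Killing'' is not a verification you postponed; as stated, it is false.

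You should also be aware that the paper's own proof is exposed at the very same spot: from $\mathrm{tr}\,C_T=0$ and $\det C_T=1$ it concludes that $C_T=\left(\begin{smallmatrix}0&-1\\1&0\end{smallmatrix}\right)$ with respect to a parallel \emph{orthonormal} frame, which is the same unjustified skewness. The norm of the symmetric part of the splitting tensor of $\N_1$ is an isometry invariant; it vanishes for the three model metrics (row~2 of Table~\ref{table1}, where $C_{e_1}$ is the standard rotation) but equals $\tfrac12|\theta-1/\theta|\neq0$ for the row-1 metrics with $\theta\neq1$, so the latter are not isometric to any of the three models despite satisfying the hypotheses. The obstacle you identified is therefore not a cosmetic omission in your write-up: closing it would require either a genuinely new argument that excludes such examples or a strengthening of the hypotheses, and until that is done neither your proposal nor the proof in the paper is complete.
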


\begin{cor}
  A complete Riemannian manifold modelled on one of the left-invariant metrics
  listed on Table~\ref{table1} is locally isometric to the
  corresponding model.
  \end{cor}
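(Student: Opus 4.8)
The plan is to study the splitting tensor of the autoparallel distribution $\N_{1}$ and the Riccati equation it obeys along the nullity leaves. Since the $(+1)$-conullity is constantly $2$, the index $\nu_{1}=n-2$ is constant, so by \cite{maltzCompl} $\N_{1}$ is autoparallel with complete, totally geodesic leaves of constant curvature $+1$, and $\N_{1}^{\perp}$ has rank $2$. For a unit field $T$ tangent to a leaf, the splitting tensor $C_{T}X=-(\nabla_{X}T)_{\N_{1}^{\perp}}$ satisfies, along a unit-speed geodesic $\gamma$ of the leaf, the Riccati equation
\[ \nabla_{T}C_{T}=C_{T}^{2}+\mathrm{Id}, \]
which follows by differentiating $C_{T}$ and using $R(X,T)T=|T|^{2}X$ for $X\perp T$, the content of $T\in\N_{1}$. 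Completeness of the leaves makes $\gamma$, hence $C_{T}$, defined for all $t\in\R$; writing $C_{T}=-P'P^{-1}$ with $P''=-P$, $P(0)=\mathrm{Id}$, $P'(0)=-C_{T}$, one has $P(t)=\cos t\,\mathrm{Id}-\sin t\,C_{T}$, and global existence is equivalent to $P(t)$ being invertible for all $t$, i.e.\ to $C_{T}$ having no real eigenvalue. In particular $C_{T}\neq0$ for $T\neq0$, so $T\mapsto C_{T}$ is injective.

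The reduction to $n=3$ is then linear algebra on $\mathrm{End}(\N_{1}^{\perp})\cong M_{2}(\R)$. Consider the discriminant form $Q(A)=(\mathrm{tr}\,A)^{2}-4\det A$; a real $2\times2$ matrix has no real eigenvalue exactly when $Q(A)<0$. Diagonalising, $Q$ has signature $(2,1)$ with a one-dimensional radical, so its negative index is $1$ and every subspace on which $Q$ is negative definite is at most one-dimensional. As $C$ is linear in $T$ and $Q(C_{T})<0$ for all $T\neq0$, the image $C(\N_{1}|_{p})$ is such a subspace, and injectivity gives $\nu_{1}=\dim\N_{1}|_{p}\leq1$. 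Since the nullity is nontrivial, $\nu_{1}=1$ and $n=3$. This step uses only completeness, in agreement with the $(+1)$-conullity $2$ examples of \cite{schmidt-wolfson-cvc1} having nonconstant scalar curvature.

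Now take $n=3$ and let $\xi$ be a global unit section of the line field $\N_{1}$ (which exists since $M$ is simply connected), with $\eta=\langle\xi,\cdot\rangle$ and $B=-\nabla\xi$, so $B\xi=0$ and $B|_{\N_{1}^{\perp}}=C_{\xi}$. The definition of $\N_{1}$ gives the Sasakian curvature identity $R(X,Y)\xi=\eta(Y)X-\eta(X)Y$; combined with the identity $R(X,Y)\xi=(\nabla_{Y}B)X-(\nabla_{X}B)Y$ this yields the Codazzi equation $(\nabla_{X}B)Y=(\nabla_{Y}B)X$ on $\N_{1}^{\perp}$. A direct computation of the Ricci tensor from the nullity identity gives $\mathrm{Ric}=\mu\,g+\lambda\,\eta\otimes\eta$ with $\mu=1+K$, $\lambda=1-K$, where $K$ is the conullity sectional curvature; constant scalar curvature $s=4+2K$ makes $\mu,\lambda$ constant, and $\lambda\neq0$ because $K\neq1$ (otherwise $M$ has constant curvature $1$ and trivial conullity). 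The contracted Bianchi identity $\mathrm{div}\,\mathrm{Ric}=\tfrac12\,ds=0$ then reduces to $\lambda\,(\mathrm{div}\,\xi)\,\eta=0$, so $\mathrm{div}\,\xi=0$, i.e.\ $\mathrm{tr}\,C_{\xi}=0$; taking the trace of the Riccati equation forces $\det C_{\xi}=1$, whence $C_{\xi}^{2}=-\mathrm{Id}$ and $\nabla_{\xi}B=0$.

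The delicate step, which I expect to be the main obstacle, is to promote the almost complex structure $C_{\xi}$ to an \emph{orthogonal} one, equivalently to show the symmetric part of $B=-\nabla\xi$ vanishes so that $\xi$ is Killing; this is precisely what separates the space forms from the constant-scalar-curvature-free examples, and it cannot be reached by the scalar-curvature computation alone, since the Ricci tensor is blind to that symmetric part. I would obtain it from the Codazzi equation together with $C_{\xi}^{2}=-\mathrm{Id}$, $\nabla_{\xi}B=0$ and completeness, via a Riccati/blow-up analysis along conullity geodesics parallel to the one used in the dimension bound. Once $\xi$ is Killing, $(\phi=C_{\xi},\xi,\eta,g)$ is a contact metric structure and the identity $R(X,Y)\xi=\eta(Y)X-\eta(X)Y$ makes it Sasakian, with $\N_{1}=\R\xi$ spanned by the Reeb field and the conullity equal to the contact distribution $\xi^{\perp}$, which is the last assertion. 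Finally, in dimension three constant scalar curvature is equivalent to constant $\phi$-sectional curvature, so $M$ is a complete simply-connected Sasakian space form; by the classification of these spaces according to the sign of $c+3$, $M$ is the Berger sphere $\SU{2}$, the universal cover $\widetilde{SL(2,\R)}$, or the Heisenberg group $Nil^{3}$.
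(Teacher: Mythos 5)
Your proposal, as written, proves the wrong statement. From its first sentence onward it \emph{assumes} constant $(+1)$-conullity $2$ and constant scalar curvature and ends with the trichotomy $SU(2)$, $\widetilde{SL(2,\R)}$, $Nil^3$ --- that is, it is a proof attempt for Theorem~\ref{kappa1}, not for the Corollary. The Corollary starts from a complete manifold $M$ \emph{modelled} on one of the metrics of Table~\ref{table1}, i.e.\ whose curvature tensor is at each point orthogonally equivalent to that of the model, and concludes that $M$ is \emph{locally isometric} to the corresponding model. Since Theorem~\ref{kappa1} is already available in the paper, the whole content of the Corollary is the bridge that your text never builds: (i) the index of $(+1)$-nullity and the scalar curvature are invariants of the orthogonal equivalence class of the curvature tensor, so a manifold modelled on a Table~\ref{table1} entry automatically has constant scalar curvature and constant $(+1)$-conullity $2$; (ii) these properties, together with completeness, lift to the universal cover $\tilde M$, which is simply connected, so Theorem~\ref{kappa1} applies to $\tilde M$; (iii) $\tilde M$ is isometric not merely to \emph{one} of the three spaces but to the \emph{corresponding} model, because $\mathrm{scal}$ and the $\varphi$-sectional curvature $K_{\mathcal D}$ are read off from the pointwise curvature tensor and single out the correct entry of the table (models sharing these invariants being isometric); (iv) the local isometry of $M$ with the model then follows through the covering projection. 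These steps are short, but they are precisely what the Corollary asserts, and none of them appears in your write-up.

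Second, even read as a proof of Theorem~\ref{kappa1}, your argument has a genuine gap, which you yourself flag as ``the main obstacle'': showing that the symmetric part of $\nabla\xi$ vanishes, equivalently that $C_\xi$ is skew-symmetric, so that $\xi$ is Killing. This is not a removable technicality. Writing $C_\xi=S+\tau J$ with $S$ symmetric traceless and $J=\left(\begin{smallmatrix}0&-1\\1&0\end{smallmatrix}\right)$, the conditions you derive, $\mathrm{tr}\,C_\xi=0$ and $\det C_\xi=1$, say only that $\tau^2-\tfrac12\|S\|^2=1$; every symmetric part $S$ is compatible with them, so no pointwise algebra can close the gap, and the sentence ``I would obtain it \dots\ via a Riccati/blow-up analysis along conullity geodesics'' is a plan, not a proof. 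Everything downstream --- the contact metric structure, $K$-contactness and Sasakian-ness, and the appeal to the classification of Sasakian space forms --- rests on this missing step. For comparison, the paper's proof of the theorem passes from $\mathrm{tr}\,C_T=0$, $\det C_T=1$ and the constancy of $C_{\gamma'}$ along nullity geodesics to the normal form $C_T=\left(\begin{smallmatrix}0&-1\\1&0\end{smallmatrix}\right)$ in a parallel orthonormal frame, and then concludes via $K$-contact $\Rightarrow$ Sasakian and the chain locally $\varphi$-symmetric (Watanabe) $\Rightarrow$ globally $\varphi$-symmetric (Takahashi) $\Rightarrow$ classification (Blair--Vanhecke).

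For what it is worth, the parts of your sketch that are complete are correct and in places nicer than the paper's: the reduction to $n=3$ via the signature $(2,1)$ of the discriminant form $Q(A)=(\mathrm{tr}\,A)^2-4\det A$ is a clean substitute for the Radon--Hurwitz count of Corollary~\ref{real-evs}(a); the contracted Bianchi identity applied to $\mathrm{Ric}=\mu g+\lambda\,\eta\otimes\eta$ replaces Lemma~\ref{lem:TraceSplitting}; and, once Sasakian-ness were established, the observation that constant scalar curvature is equivalent to constant $\varphi$-sectional curvature in dimension three would shortcut the $\varphi$-symmetric space detour. But these merits do not compensate for the two gaps above.
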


Recall that a Sasakian space form is a Sasakian manifold
of constant $\varphi$-sectional curvature, where the 
$\varphi$-sectional curvature plays the role accorded
to the holomorphic sectional curvature in K\"ahler geometry.
We refer to~\cite{bookBlair} for a discussion of Sasakian geometry. 
In particular the spaces in Theorem~\ref{kappa1}
have the structure of Lie groups and thus
are homogeneous contact metric manifolds. 
A straightforward computation
using~\cite{milnor}
shows that the admissible metrics with~$1$-conullity~$2$ 
are given as follows. The groups $SU(2)$,
$\widetilde{SL(2,\R)}$, $Nil^3$ are unimodular, 
so there is an orthonormal basis $e_1$, $e_2$, $e_3$, where $e_1$
is tangent to the $1$-nullity and
\begin{equation}\label{ei}
  [e_1,e_2]=\lambda_3e_3,\ [e_2,e_3]=\lambda_1e_1,\ [e_3,e_1]=\lambda_2e_2.
  \end{equation}
By changing $e_1$ to $-e_1$, we may assume $\lambda_1>0$, and
then the possibilities for left-invariant metrics are as follows:
\begin{table}[H]
  $\begin{array}{|c|c|c|c|c|c|c|}
     \hline
     \lambda_1 & \lambda_2 & \lambda _3 & M & \mathrm{scal} &
                                                              \textrm{$\varphi$-sect curv} &\textrm{Condition} \\
     \hline
     \theta + 1/\theta & \theta & 1/\theta& SU(2) & 2 &-1& \theta>0 \\ \hline
             &   &   & SU(2) &  && \theta > 0 \\ \cline{4-4}\cline{7-7}
     2         & \theta  & \theta  & \widetilde{SL(2,\R)} & -2+4\theta &
                                                                                           -3+2\theta& \theta < 0 \\  \cline{4-4}\cline{7-7}
      &  &  & Nil^3 &  && \theta=0 \\
     \hline
   \end{array}$
   \smallskip
   \caption{}\label{table1}
   \end{table}
 According to Perrone~\cite{PerroneThree}, there is an additional,
 non-unimodular Lie group structure on the Sasakian space form of 
$\varphi$-sectional
 curvature $<-3$, that is, the simply-connected
 solvable Lie group with Lie algebra
    \[ [e_1,e_2] =\alpha e_2+2\xi,\ [e_1,\xi]=[e_2,\xi]=0, \]
      where $\alpha\neq0$, $\xi$ is the characteristic vector
      field and spans $\N_1$, and $e_1$, $e_2$, $\xi$ is
      orthonormal, is isometric (but not isomorphic) to 
      $\widetilde{SL(2,\R)}$ if $\alpha^2=-2\theta$.

\subsection{Results with $\kappa=0$}

The $3$-dimensional case of the following theorem is proved
in~\cite[Thm.~3]{aazami-thompson}, and a simple proof in the case of arbitrary
dimension is sketched in~\cite[Remark, p.~1324]{FZ}. For the
convenience of the reader, we provide an alternate, and as well
simple, argument in subsection~\ref{sec:kappa0}. 

\begin{thm}\label{kappa0}
  Let $M$ be a simply-connected complete 
  Riemannian $n$-manifold with
  maximal $0$-conullity~$2$. Assume the scalar
  curvature function~$s$ is positive and bounded away from zero.
  Then $M$ splits as the Riemannian product $\R^{n-2}\times\Sigma$,
  where $\Sigma$ is diffeomorphic to the $2$-sphere. 
\end{thm}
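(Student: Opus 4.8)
The plan is to show that the $0$-nullity distribution $\N_0$ is parallel and then invoke the de Rham decomposition. First I would pin down the conullity. By hypothesis $\nu_0\geq n-2$ everywhere, so the $0$-conullity is at most $2$. At a point where the conullity were $0$ or $1$ the whole curvature tensor would vanish there (since $z\in\N_0$ forces $R(z,\cdot)=0$, so $R$ is determined by its restriction to $\N_0^\perp$, which carries no curvature once $\dim\N_0^\perp\leq 1$), whence $s=0$ there, contradicting $s\geq\varepsilon>0$. Thus the conullity equals $2$ at every point, $\nu_0\equiv n-2$ is constant, and $\N_0$ is a smooth autoparallel distribution whose leaves are complete, flat, totally geodesic $(n-2)$-submanifolds (using completeness of $M$ and the result recalled in the introduction). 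Pointwise $M$ then has the curvature tensor of $\R^{n-2}\times\Sigma^2$ with $\Sigma^2$ of curvature $K$, and $s=2K$, so $K\geq\varepsilon/2>0$ everywhere.

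Next I would study the splitting tensor $C$ of $\N_0$ along the leaves. For a unit-speed geodesic $\gamma$ in a leaf, set $C(t)=C_{\gamma'(t)}\in\operatorname{End}(\N_0^\perp)$. Since $\gamma'\in\N_0$ kills the curvature term $R(\cdot,\gamma')\gamma'$, the tensor $C$ satisfies the Riccati equation $C'=C^2$, so $C(t)=C_0(I-tC_0)^{-1}$, and completeness of the leaf forces this to be defined for all $t\in\R$; in particular $C_0$ has no nonzero real eigenvalue. I would then derive, from the second Bianchi identity together with $R(\cdot,T)=0$ for $T\in\N_0$, the transport law $\gamma'(K)=(\operatorname{tr} C)\,K$, which integrates to $K(t)=K(0)/\det(I-tC_0)$. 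Since $K\geq\varepsilon/2>0$ for all $t$, the quadratic $\det(I-tC_0)=1-t\operatorname{tr}(C_0)+t^2\det(C_0)$ must stay bounded and positive on all of $\R$, forcing $\operatorname{tr}(C_0)=\det(C_0)=0$. Hence $C_0$ is nilpotent, and the same holds in every leaf direction. It is precisely here that the hypothesis $s\geq\varepsilon>0$ is essential: completeness alone excludes real nonzero eigenvalues, but the lower curvature bound is what rules out complex eigenvalues (for which $\det(I-tC_0)\to\infty$ would force $K\to 0$).

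The main obstacle is to upgrade ``$C$ nilpotent'' to ``$C\equiv 0$''. Here I would feed the nilpotency back into the structure equations of the splitting tensor along the flat complete leaf: flatness of $\N_0$ constrains the family $\{C_T\}_{T\in\N_0}$ strongly, making it simultaneously triangularizable and essentially parallel, of the form $C_T=\varphi(T)N$ for a fixed nilpotent $N$ and a parallel $1$-form $\varphi$ on the leaf. A nonzero such $C$ renders $\N_0^\perp$ non-integrable and shears it along the infinite leaf; excluding this configuration against completeness of $M$ and $K\geq\varepsilon/2>0$ is the delicate step, and it is what finally yields $C\equiv 0$. I expect this nilpotent residue to be the genuinely hard part of the argument, the earlier reduction being essentially a matter of the Riccati and transport identities.

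Finally, $C\equiv 0$ gives $\nabla_YT\in\N_0$ for every $T\in\N_0$ and every $Y$, so $\N_0$ and its orthogonal complement $\N_0^\perp$ are both parallel. As $M$ is complete and simply connected, the de Rham theorem splits it as $M=M_1\times\Sigma$ with $TM_1=\N_0$ and $T\Sigma=\N_0^\perp$. Then $M_1$ is a complete simply-connected flat $(n-2)$-manifold, hence $M_1=\R^{n-2}$, while $\Sigma$ is a complete simply-connected surface with Gauss curvature $K=s/2\geq\varepsilon/2>0$; by Bonnet--Myers it is compact with finite fundamental group, so, being a simply-connected closed surface, it is diffeomorphic to $S^2$.
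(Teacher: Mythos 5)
Your opening reduction (conullity exactly $2$ everywhere, hence $\nu_0\equiv n-2$ with complete flat totally geodesic leaves), your derivation that $\operatorname{tr}C_0=\det C_0=0$ from the Riccati equation plus the transported curvature identity $K(t)=K(0)/\lvert\det(I-tC_0)\rvert$, and your endgame (de Rham splitting plus Bonnet--Myers) are all correct and agree with the paper's own steps (Corollary~\ref{real-evs}, Lemma~\ref{tr-spl-kappa0}). But the theorem is not proved: the passage from ``$C$ nilpotent'' to ``$C\equiv0$'', which you yourself identify as the genuinely hard part, is left as a placeholder. Saying that ``excluding this configuration against completeness and the curvature bound is the delicate step'' is a restatement of what remains to be shown, not an argument, so there is a genuine gap exactly at the heart of the theorem.

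Worse, the direction you sketch for that step cannot succeed. You propose to exploit the structure of $\{C_T\}$ \emph{along the flat complete leaf} (simultaneous triangularizability, $C_T=\varphi(T)N$ with $\varphi$ parallel, ``shearing along the infinite leaf''). But when $C_0$ is nilpotent one has $(I-tC_0)^{-1}=I+tC_0$, hence $C(t)=C_0(I-tC_0)^{-1}=C_0$ and $K(t)=K(0)$: along every nullity geodesic the splitting tensor and the conullity curvature are \emph{constant}, so the nilpotent configuration is perfectly consistent with completeness of the leaves and with $K\geq\varepsilon/2$, leaf by leaf; no leafwise analysis can produce a contradiction. The contradiction has to come from differentiating \emph{transversally} to the nullity. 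This is what the paper does: on the open set $U=\{C\neq0\}$ it builds an adapted frame $T\in\N_0$, $X,Y\in\D$ with $C_TX=0$, $C_TY=aX$, $a>0$, and computes the curvature conditions $R(X,Y)X,\,R(Y,X)Y\perp$~nullity to get the structure equations~(\ref{eqns-kappa0}): $\alpha=0$, $X(a)=a\beta$, and $s=2(X(\beta)-\beta^2)$. Since $\alpha=0$, the integral curves of $X$ are geodesics, complete by completeness of $M$; the equation $X(a)=a\beta$ shows $a$ stays positive so these curves remain in $U$ and the equations persist along them; and then $s\geq2\delta^2$ gives the Riccati inequality $X(\beta)\geq\delta^2+\beta^2$, whose solutions blow up in finite time (integrate to $\arctan(\delta^{-1}\beta)\geq\delta t+\mathrm{const}$), a contradiction. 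So the hypothesis on $s$ enters through a Riccati comparison along \emph{horizontal} geodesics, not through any degeneration along the nullity leaves; your proposal is missing precisely this mechanism.
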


The splitting in Theorem~\ref{kappa0} ceases to be true if $s$ 
attains negative values, as the examples constructed by
Sekigawa~\cite{sekigawa} show. Recently,
a complete description of the metrics on complete simply-connected
locally irreducible $3$-manifolds with constant $0$-nullity $1$ and constant
negative scalar curvature, as well as the topology of
their quotients in case the fundamental group is finitely
generated, has been obtained~\cite{brooks2020}. 
In view of Theorem~\ref{kappa0}
and~\cite[Thm.~1]{brooks2019}, the following question seems interesting:

\begin{quest}
  Is there a simply-connected complete irreducible Riemannian $n$-manifold
  with constant~$0$-conullity~$2$ and nonnegative sectional curvature?
  \end{quest}

In~\cite[Thm.~3]{brooks2019} it was claimed that complete $4$-manifolds
with $0$-nullity~$1$, non-zero splitting tensor,
and finite volume do not exist. In contrast,
we show:

\begin{thm}\label{nul1-kappa0}
  There exists a compact irreducible locally homogeneous Riemannian $5$-manifold
with $0$-nullity~$1$.
\end{thm}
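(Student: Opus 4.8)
The plan is to construct an explicit homogeneous example, since the theorem is an existence statement and the cleanest route is to exhibit a Lie group with a left-invariant metric having the required properties. Let me think about what structure we need. We want a $5$-manifold with $0$-nullity equal to~$1$ (so conullity~$4$), with nonzero splitting tensor, that is homogeneous, irreducible, and admits a finite-volume quotient. The finite-volume condition strongly suggests looking at a \emph{solvable} (or more generally unimodular) Lie group, because by Mostow/Borel-type results a solvmanifold admits a lattice (hence a compact or finite-volume quotient) precisely when it is unimodular; so the first design constraint is unimodularity. The $0$-nullity being exactly~$1$ means there is a one-dimensional parallel-flat direction $\N_0$ along which $R$ vanishes, and the splitting tensor of this line field should be nonzero, which is what distinguishes the example from a product and forces irreducibility.

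\smallskip

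First I would set up a candidate Lie algebra $\Lg$ of dimension~$5$ with a distinguished vector $e_5$ spanning the would-be $0$-nullity line, together with an orthonormal basis $e_1,\dots,e_5$, and write down structure constants that make $\operatorname{ad}$ of the nullity direction (the splitting tensor) a nonzero skew-symmetric or otherwise nontrivial endomorphism of the conullity space $\mathrm{span}\{e_1,e_2,e_3,e_4\}$. A natural model is a semidirect product $\R^4\rtimes_A\R$ where $A$ is a derivation acting on $\R^4$; the nullity line would be the $\R$-factor (or rather the direction whose splitting tensor is $A$ restricted appropriately), and $R(x,y)e_5=0$ for all $x,y$ is the algebraic condition to impose. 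To keep unimodularity one needs $\operatorname{tr} A=0$, which is why a purely skew $A$ (e.g.\ a sum of two rotation blocks with frequencies $\alpha,\beta$) is attractive: it is automatically trace-free, and the two independent frequencies give enough freedom to make the remaining curvature nonconstant/irreducible while forcing the nullity to be exactly one-dimensional rather than larger.

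\smallskip

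The bulk of the verification is then a direct computation using Koszul's formula to get $\nabla$, then computing $R$ on the orthonormal basis. I would check in order: (i) $e_5\in\N_0$, i.e.\ $R(e_i,e_j)e_5=0$ and $R(e_5,\cdot)\cdot=0$; (ii) $\nu_0=1$ exactly, by showing no other tangent vector lies in $\N_0$ (this is where the two distinct frequencies $\alpha\ne\pm\beta$ are used to rule out extra nullity); (iii) the splitting tensor $C_{e_5}$ is nonzero, which is immediate from $A\ne0$; (iv) irreducibility, via the holonomy or via the de Rham decomposition — here one argues that the curvature operator together with its covariant derivatives act irreducibly on $T_pM$, or equivalently that the restricted holonomy has no invariant subspace, so $M$ is not a local Riemannian product; and (v) existence of a finite-volume quotient, by exhibiting a lattice $\Gamma<G$ using the unimodularity and the classical criterion that $\R^4\rtimes_A\R$ admits a lattice when $\exp A$ (or $\exp(tA)$ for some $t$) is conjugate to an integer matrix in $SL(4,\Z)$ — this is arranged by choosing the frequencies so that the monodromy is a hyperbolic or elliptic-unipotent element of $SL(4,\Z)$.

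\smallskip

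The main obstacle I anticipate is step~(v) together with irreducibility~(iv) \emph{simultaneously}: the lattice-existence criterion forces the eigenvalue data of $A$ to be algebraically special (the characteristic polynomial of the monodromy must have integer coefficients and determinant~$1$), while irreducibility and the exact-nullity condition force genericity of the frequencies; reconciling these — typically by taking the nontrivial block of $A$ to correspond to an algebraic integer that is a unit in a real quadratic (or quartic) field, so that a power of $\exp A$ lands in $SL(4,\Z)$ — is the delicate part and is presumably exactly the point where the claim in~\cite{brooks2019} (that no such example exists in dimension~$4$) breaks down: in dimension~$4$ there is simply not enough room in the conullity space to accommodate both a nondegenerate monodromy admitting an integer lattice and a nonzero splitting tensor, whereas in dimension~$5$ the four-dimensional conullity complement provides the needed room. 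I would finish by remarking that the resulting left-invariant metric is genuinely homogeneous (the group acts transitively by isometries) and that the quotient $\Gamma\backslash G$ inherits all local invariants, hence is a finite-volume irreducible $5$-manifold with $0$-nullity~$1$, contradicting the four-dimensional nonexistence assertion one dimension up.
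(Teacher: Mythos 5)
Your overall skeleton coincides with the paper's: both take an almost Abelian unimodular group $\R^4\rtimes_A\R$ with a left-invariant metric making the $\R$-factor and $\R^4$ orthogonal, and both invoke a lattice-existence criterion for such groups to get the finite-volume quotient. But your central design choice is fatally flawed. You propose to take $A$ purely skew-symmetric (``a sum of two rotation blocks''), on the grounds that this is automatically trace-free. For this class of metrics, Koszul's formula gives $\nabla_\xi X=A^{sk}X$, $\nabla_X\xi=-A^{sy}X$, $\nabla_XY=\langle A^{sy}X,Y\rangle\xi$, and \emph{every} component of the curvature tensor is built from the symmetric part $A^{sy}$; for instance $R(X,Y)Z=-\langle A^{sy}Y,Z\rangle A^{sy}X+\langle A^{sy}X,Z\rangle A^{sy}Y$ and $R(\xi,X)\xi=([A^{sy},A^{sk}]+(A^{sy})^2)X$. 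Hence a skew $A$ produces \emph{no} curvature at all: the metric is flat (this is precisely Milnor's criterion for flat left-invariant metrics), the $0$-nullity is all of $TM$, and steps (i)--(iii) of your verification plan cannot succeed. The compact quotient your elliptic monodromy would produce is just a flat Bieberbach manifold.

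A second, related error is the placement of the nullity line along the $\R$-factor $e_5=\xi$: in any nonflat example of this type that is impossible, since $\xi\in\N_0$ would force $[A^{sy},A^{sk}]+(A^{sy})^2=0$, and taking traces gives $\mathrm{tr}\bigl((A^{sy})^2\bigr)=0$, i.e.\ $A^{sy}=0$ and flatness again. Indeed the paper proves that the nullity of a nonflat almost Abelian metric Lie group is $\ker A^{sy}\cap(A^{sk})^{-1}(\ker A^{sy})$, a subspace of $\R^4$. The actual construction therefore needs $A$ with \emph{both} parts nonzero: the paper takes $A^{sy}=\mathrm{diag}(0,0,-a,a)$ (trace-free, hence unimodular) coupled to a skew part with entries $b,c$ tying the kernel of $A^{sy}$ to its complement, so that the nullity is exactly one line (spanned by a vector $X_2\in\R^4$) and the splitting tensor $C_{X_2}\xi=-bX_1$ is nonzero. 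The tension you correctly anticipate between integrality (for the lattice) and genericity (for nullity one) is then resolved not by unit-circle eigenvalues but by matching the characteristic polynomial of $A$ to that of a matrix with two real and two complex-conjugate eigenvalues whose exponential is conjugate to an explicit integral unimodular matrix (Harshavardhan's computation, fed into Filipkiewicz's criterion); and irreducibility is settled via Szab\'o's theorem on homogeneous semi-symmetric spaces applied to would-be conullity-two factors, not by an unargued holonomy claim.
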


The manifold in Theorem~\ref{nul1-kappa0}
is in fact an almost Abelian
Lie group. The idea of construction follows the example
in~\cite[\S9]{DOV}. Other examples of compact locally homogeneous
spaces with non-trivial nullity are given in~\cite{DOV2}. 

 \subsection{Results with $\kappa=-1$}

 In the $3$-dimensional case,
 the following result is closely related to~\cite[Thm.~1.1]{sw}.
 
\begin{thm}\label{kappa(-1)}
  Let $M$ be a Riemannian $n$-manifold ($n\geq3$) with
  maximal $(-1)$-conullity~$2$. 
  \begin{enumerate}[(a)]
  \item If the scalar curvature is constant and
    $\D=\N_{-1}^\perp$ is integrable on an open subset $U$ of  
$(-1)$-conullity~$2$ then
    $U$ is locally isometric to
    the group of ridig motions of the Minkowski plane,
    $E(1,1)=SO_0(1,1)\ltimes\R^2$,
    with a left-invariant metric.
  \item Assume $M$ is complete and has finite volume. Assume, in addition, 
that either~$n=3$ or the scalar curvature 
bounded away from $-n(n-1)$. Then the universal covering of~$M$ is
    homogeneous. 
    \item If $M$ is homogeneous
    and simply-connected, then $M$ is isometric to $E(1,1)$
    or $\widetilde{SL(2,\R)}$ with a left-invariant metric.
    \end{enumerate}  
  \end{thm}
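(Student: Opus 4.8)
The common engine for the three parts is the splitting tensor of the $(-1)$-nullity distribution. On any open set where $\nu_{-1}$ is locally constant, $\N_{-1}$ is autoparallel with totally geodesic leaves of constant curvature $-1$, and to each nullity field $T$ one associates the endomorphism $C_T=-(\nabla_{\,\cdot\,}T)^{\D}$ of the rank-two distribution $\D=\N_{-1}^\perp$. I would begin by recording the facts I will use repeatedly. Along a unit-speed geodesic $\gamma$ of a nullity leaf, $C_T$ with $T=\gamma'$ obeys the Riccati equation $\nabla_T C_T=C_T^2-\mathrm{Id}$, whose equilibria are the solutions of $C_T^2=\mathrm{Id}$; its linearization is $u''=u$, so the flow is hyperbolic and admits no oscillation -- the non-constant bounded orbits are strictly monotone and join the two equilibria. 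Next, $\D$ is integrable exactly when every $C_T$ is self-adjoint, in which case $C_T$ is the shape operator of the $\D$-leaves in the direction $T$. Finally, since every plane meeting $\N_{-1}$ has sectional curvature $-1$, the scalar curvature is $s=2K(\D)-n(n-1)+2$, where $K(\D)$ is the single sectional curvature of the conullity plane; thus constancy of $s$ is equivalent to constancy of $K(\D)$, and $s$ bounded away from $-n(n-1)$ is equivalent to $K(\D)$ bounded away from $-1$, i.e.\ to the conullity staying equal to $2$.

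For part (a) I would work on $U$, where $C_T$ is self-adjoint, so its eigenframe is parallel along each nullity geodesic and its eigenvalues are scalar solutions of $\mu'=\mu^2-1$. Imposing constancy of $K(\D)$ on the Riccati and Codazzi equations for $C$ rules out the strictly monotone orbits, along which $K(\D)$ would vary, and forces each $C_T$ to an equilibrium $C_T^2=\mathrm{Id}$. The isotropic equilibria $C_T=\pm\mathrm{Id}$ make the conullity plane have curvature $-1$ as well, i.e.\ constant curvature and conullity $0$, and are therefore excluded; there remains the traceless $\mathrm{diag}(1,-1)$, which is precisely the infinitesimal model of the hyperbolic-rotation action of $SO_0(1,1)$ on the Minkowski plane. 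Reconstructing the connection from this constant data identifies $U$ locally with $E(1,1)$ carrying the corresponding left-invariant metric, and the same equilibrium rigidity applied to independent nullity directions shows that no nullity direction beyond one can be accommodated, so $n=3$.

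Part (b) is the analytic heart and the step I expect to fight hardest. The hypothesis on $s$ (or $n=3$) guarantees through the formula above that the conullity never drops to $0$, so the nullity foliation with complete leaves is defined on all of $M$. Completeness already confines the dynamics: since each leaf geodesic is defined for all time and $C_T$ stays finite along it, the Riccati orbits cannot blow up in either direction, so every $C_T$ lies on a bounded orbit -- an equilibrium or one of the monotone orbits between equilibria. Here finite volume enters in the spirit of Florit--Ziller, providing through the leafwise geodesic flow a recurrence mechanism: for an appropriate invariant measure, Poincar\'e recurrence makes almost every nullity geodesic return arbitrarily close to its initial configuration. A strictly monotone, non-oscillating orbit cannot be recurrent, so by continuity every $C_T$ sits at an equilibrium; the splitting tensor is then parallel along $\N_{-1}$, and together with the canonical framing by its $\pm1$-eigendistributions and the nullity it yields an Ambrose--Singer homogeneous structure, whence the universal covering is homogeneous. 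The delicate points are constructing the correct invariant measure for a flow along the leaves of a foliation rather than a genuine flow on $M$, treating the null set where recurrence might fail, and pinning down the transverse data.

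For part (c) I would exploit the dichotomy already prepared. A homogeneous $M$ has $\nu_{-1}$ and $s$ constant, so whenever $\D$ is integrable the hypotheses of part (a) are met and give $E(1,1)$, incidentally forcing $n=3$. When $\D$ is not integrable, $C_T$ has a nonzero skew-symmetric part while still satisfying $C_T^2=\mathrm{Id}$ with eigenvalues $\pm1$ but non-orthogonal eigendirections; passing to the Lie-algebra level, homogeneity makes the invariants of $C_T$ constant and the structure equations reduce to a three-dimensional unimodular model. Matching the curvature invariants through Milnor's description of left-invariant metrics on three-dimensional Lie groups then identifies this non-integrable case with $\widetilde{SL(2,\R)}$, leaving exactly $E(1,1)$ and $\widetilde{SL(2,\R)}$ and completing the classification.
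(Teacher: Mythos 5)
Your proposal follows the same skeleton as the paper's proof: constancy of the scalar curvature, via the identity $\frac12\frac{d}{dt}\mathrm{scal}=\mathrm{tr}\,C_T(K_{\mathcal D}-\kappa)$, forces $\mathrm{tr}\,C_T=0$ and $\det C_T=-1$; the Riccati equation \eqref{eq:stde} then pins $C_T$ at an equilibrium with eigenvalues $\pm1$; finite volume enters through Poincar\'e recurrence; and the endgame is Lie-theoretic. However, two of your steps have genuine gaps.

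The serious one is in part (b). You argue: recurrence forces each $C_T$ to a Riccati equilibrium; hence the splitting tensor is parallel along $\N_{-1}$; hence, together with the framing by its $\pm1$-eigendistributions, it yields an Ambrose--Singer structure and homogeneity. The last implication fails. Equilibrium ($C_T^2=I$) is a pointwise condition that leaves most of the metric undetermined: in the paper's adapted frame $T,X,Y$ the splitting tensor is $C_T=\left(\begin{smallmatrix}-1&0\\2F&1\end{smallmatrix}\right)$, which squares to the identity for an \emph{arbitrary} function $F$, and the connection coefficient $\beta$ (from $\nabla_YY=T+\beta X$) does not appear in $C_T$ at all. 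Homogeneity amounts to proving $\beta\equiv0$ and $F$ constant, and that is precisely what the paper's recurrence argument accomplishes: by \eqref{eqns-kappa-negative} one has $T(\beta)=\beta$, so $\beta$ grows like $e^t$ along nullity geodesics and recurrence kills it; then $T(F)=Y(F)=0$, and the bracket relation $[T,X]=-X+2FY$ gives $TX(F)=-X(F)$, so a second application of recurrence kills $X(F)$; a final connectedness argument is needed to show that the set of non-isotropic points is all of $M$. None of this appears in your sketch, and the difficulties you do flag (an invariant measure for a leafwise flow, null sets) are not the real obstacles: once $\mathrm{div}\,T=0$ is known (Lemma~\ref{div}, which the paper proves by a divergence-theorem/volume-monotonicity argument, not by recurrence), the flow of $T$ is an honest volume-preserving flow on $M$ and recurrence applies directly.

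The second gap is the dimension reduction, which every part needs ($E(1,1)$ and $\widetilde{SL(2,\R)}$ are $3$-dimensional, and part (b) allows any $n\geq3$ a priori). Your ``equilibrium rigidity applied to independent nullity directions'' cannot rule out $n=4$: for orthonormal nullity fields $T_1,T_2$, the requirement that $aC_{T_1}+bC_{T_2}$ be traceless with determinant $-1$ for all $a^2+b^2=1$ is equivalent, for traceless $2\times2$ matrices, to $\mathrm{tr}(C_{T_1}C_{T_2})=0$, i.e.\ to anticommutation, and this is satisfied by the Pauli-type pair $\left(\begin{smallmatrix}1&0\\0&-1\end{smallmatrix}\right)$, $\left(\begin{smallmatrix}0&b\\1/b&0\end{smallmatrix}\right)$. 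So a $2$-dimensional $(-1)$-nullity is perfectly consistent with all the pointwise algebra; what excludes it is the differentiated identity $\nabla_{T_2}C_{T_1}=C_{T_1}C_{T_2}+C_{\nabla_{T_2}T_1}$ coming from \eqref{eq:spttns}, which admits no solution with those matrices. This differential step, or some substitute for it, is missing from your proposal.
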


The left-invariant metrics in Theorem~\ref{kappa(-1)}(c) can also
  be described following~\cite{milnor}. The groups listed are unimodular
  and we use the above notation to write~(\ref{ei}). By switching
  $e_2$ and $e_3$, and changing
  $e_2$ to its opposite, if necessary, we may assume $0<\lambda_3\leq1$, and
  then
  the possibilities are ($K_{\mathcal D}$ denotes the sectional curvature
  of the $2$-plane orthogonal to $\N_{-1}$):
  \begin{table}[H]
    $\begin{array}{|c|c|c|c|c|c|c|}
     \hline
     \lambda_1 & \lambda_2 & \lambda _3 & M & \mathrm{scal} &K_{\mathcal D}&\textrm{Condition} \\
     \hline
     \multirow{2}{*}{$\theta - 1/\theta$} &\multirow{2}{*}{$-1/\theta$}&\multirow{2}{*}{$\theta$}&\widetilde{SL(2,\R)} &\multirow{2}{*}{$-2$} &\multirow{2}{*}{$1$}& 0<\theta<1\\ 
             &   &   & E(1,1) &  && \theta =1 \\ \cline{4-4}\cline{7-7}
     \hline
   \end{array}$
\smallskip
\caption{}\label{table2}
\end{table}

  Note that $SL(2,\R)$ and $E(1,1)$ are both modelled on the same
  algebraic curvature tensor.

The following theorem deals with a situation of least non-trivial nullity. 
We may assume $n\geq4$ as the case $n=3$ is covered by 
Theorem~\ref{kappa(-1)}. 

 \begin{thm}\label{nul1}
   Let $M$ be a complete Riemannian $n$-manifold ($n\geq4$) with constant
 $(-1)$-nullity $1$ and finite volume. Then $n$ is odd and the universal
 Riemannian covering of $M$ is isometric to the solvable
 (almost-Abelian, unimodular) Lie group $G=\R\ltimes\R^m$ ($m=n-1$),
 where $\R^m$ is Abelian, and the adjoint action
 of a certain element of $\R$ on $\R^m$ is given in an orthonormal basis
by the
 matrix $\left(\begin{smallmatrix} I_{m/2}&0\\0&-I_{m/2}\end{smallmatrix}\right)$
 \end{thm}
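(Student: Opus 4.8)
The plan is to reduce everything to the behaviour of the splitting tensor $C$ of the rank one nullity distribution along its leaves, to convert the finiteness of the volume into pointwise rigidity of $C$, and finally to read off from this a global adapted frame whose structure functions are constant and coincide with those of the advertised almost-Abelian Lie algebra.

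First I would record the infinitesimal picture. Since $\nu_{-1}\equiv 1$ is constant, $\N_{-1}$ is autoparallel, and by completeness its leaves are complete geodesics; let $T$ be a local unit field spanning $\N_{-1}$ and let $\gamma$ be an integral geodesic of $T$. Writing $C(t)=C_{T(t)}$ for the restriction of the splitting tensor to $\gamma$, the splitting-tensor equation of Section~\ref{prelim} specializes for $\kappa=-1$ to the Riccati equation
\[ C'=C^2-\mathrm{Id}, \]
and completeness of $\gamma$ guarantees that $C(t)$ is defined for all $t\in\R$. Since $\nabla_TT=0$ and $\nabla_XT=-C_TX$ for $X\in\D=\N_{-1}^\perp$, the unit field $T$ satisfies $\operatorname{div}T=-\operatorname{tr}C$.

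The heart of the matter is to promote finite volume into rigidity of $C$. Assuming $C$ is self-adjoint (equivalently $\D$ is integrable), its eigenvalues solve the scalar equation $c'=c^2-1$, whose only globally defined solutions are $c\equiv\pm 1$ and $c=-\tanh(t-t_0)$; in particular they lie in $[-1,1]$, so $C$ is bounded and $\operatorname{tr}(C^2)\le m$. Setting $f=\operatorname{tr}C$, the trace of the Riccati equation gives $T(f)=\operatorname{tr}(C^2)-m\le 0$, and combining $\operatorname{div}T=-f$ with $\operatorname{div}(fT)=T(f)-f^2$ and integrating over the finite-volume manifold $M$ (so that the integral of the divergence of the bounded, hence $L^1$, field $fT$ vanishes) yields
\[ \int_M\bigl(\operatorname{tr}(C^2)-m\bigr)\,dV=\int_M f^2\,dV. \]
The left-hand side is $\le 0$ and the right-hand side is $\ge 0$, forcing $f\equiv 0$ and $\operatorname{tr}(C^2)\equiv m$; hence every eigenvalue of $C$ equals $\pm 1$. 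As $C$ is then a self-adjoint involution, $C^2=\mathrm{Id}$ and the Riccati equation reads $C'=0$, so $C$ is a parallel self-adjoint involution of trace zero. Its $(+1)$- and $(-1)$-eigenbundles therefore have equal rank $m/2$, whence $m$ is even and $n=m+1$ is odd. The main obstacle is precisely the self-adjointness used here: establishing that $\D$ is integrable, and more generally controlling the skew part $W$ of $C$ (which evolves by $W'=SW+WS$) so that it cannot conspire with a large symmetric part $S$ to defeat the inequality $\operatorname{tr}(C^2)\le m$, together with justifying the integration by parts on the complete finite-volume manifold, is where the work lies.

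It remains to reconstruct the metric. With $C\equiv A$ a fixed self-adjoint involution of trace zero, choose an orthonormal frame $e_0=T,e_1,\dots,e_m$ of eigenvectors, $Ae_i=\lambda_ie_i$ with $\lambda_i=\pm 1$, parallel along $\N_{-1}$. The relations $\nabla_{e_i}e_0=-\lambda_ie_i$ and $\langle\nabla_{e_i}e_j,e_0\rangle=\lambda_i\delta_{ij}$ are already forced by $C$; the remaining step is to feed the curvature components not prescribed by the nullity condition into the Gauss equation to show that the leaves of $\D$ are flat and that $(\nabla_{e_i}e_j)^{\D}=0$, so that the frame has constant structure functions
\[ [e_0,e_i]=\lambda_ie_i,\qquad [e_i,e_j]=0. \]
These are exactly the brackets of the almost-Abelian, unimodular Lie algebra of $G=\R\ltimes_A\R^m$ with $A=\operatorname{diag}(I_{m/2},-I_{m/2})$, so $M$ is locally isometric to the associated left-invariant metric; since the universal Riemannian covering is complete and simply connected and $G$ is a complete simply connected homogeneous model, the two are isometric, which finishes the proof.
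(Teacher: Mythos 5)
Your proposal has a genuine gap, and it is the one you flag yourself: the self-adjointness of $C$ is assumed, never proved, and it cannot be recovered by refining your integral identity. Two concrete reasons. First, the equality case of your identity does not pin down $C$: the non-symmetric involutions
\[ C=\begin{pmatrix} I_{m/2} & 0\\ D & -I_{m/2}\end{pmatrix},\qquad D\neq 0, \]
are constant (hence complete) solutions of $C'=C^2-I$ with $\mathrm{tr}\,C=0$ and $\mathrm{tr}(C^2)=m$, so they pass your argument unscathed even after you conclude $f\equiv0$ and $\mathrm{tr}(C^2)\equiv m$; eliminating exactly this configuration is the hard part of the proof. Second, and decisively, your outline nowhere uses the hypothesis $n\geq 4$, yet the claimed conclusion is false for $n=3$: by Theorem~\ref{kappa(-1)} and Table~\ref{table2}, $\widetilde{SL(2,\R)}$ carries a left-invariant metric with constant $(-1)$-nullity $1$ whose conullity distribution is non-integrable (there $C_T=\left(\begin{smallmatrix}-1&0\\2F&1\end{smallmatrix}\right)$ with $F\neq0$, precisely the triangular involution above with $m=2$), and, being unimodular, it admits compact quotients. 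So any correct proof of self-adjointness must use $m/2\geq 2$. In the paper this enters only at the very end: Lemma~\ref{div} (the explicit solution~(\ref{c(t)-kappa=-1}) of the Riccati equation, Jacobi's formula, the limits of $\mathrm{tr}\,C(t)$ as $t\to\pm\infty$, and a divergence-theorem/first-variation argument on tubes over a transversal disk) first pins down the spectrum $\{\pm1\}$ with equal multiplicities $m/2$; the Poincar\'e recurrence theorem then forces the triangular entries of $C(t)$, which are polynomials in $e^{\pm t}$, to be constant, so that $C$ is a parallel involution of the block form above; and finally the freedom of rotating the eigenframe by $O(m/2)\times O(m/2)$ along horizontal curves forces $D=0$. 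This last monodromy step is where $m/2\geq2$ is used and where integrability of $\D$ (your self-adjointness) is finally \emph{established} rather than assumed.

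A second, smaller gap of the same nature sits in your reconstruction step. The nullity condition does not algebraically force $(\nabla_{e_i}e_j)^{\D}=0$: computing $R(T,e_i)e_j$ in a frame parallel along nullity geodesics yields only the ODEs $T(\Gamma_{ij}^k)=\pm\Gamma_{ij}^k$, whose solutions grow or decay exponentially along the geodesic. The paper kills the Christoffel symbols, and likewise shows that the entries of $C$ are constant in horizontal directions, by repeated applications of Poincar\'e recurrence in a downward induction on the frame indices; only then does Lie's third theorem apply. So the finite-volume hypothesis is needed again at this stage, and ``feeding the curvature components into the Gauss equation'' alone would not suffice.
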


According to formulae~(\ref{R-almost-abelian}) below, 
if we replace the matrix in the statement of 
Theorem~\ref{nul1} by  $\left(\begin{smallmatrix} I_k&0\\0&-I_{m-k}\end{smallmatrix}\right)$, where $k=1,\ldots,m-1$, 
we get a homogeneous (hence complete) 
 Riemannian $n$-manifold $\R\ltimes\R^m$ with constant
 $(-1)$-nullity $1$, but it will not have quotients of finite volume, as 
it will not be unimodular, unless $k=m/2$.

\subsection{Complete non-integrability of the conullity distribution}

In the last part of this work, we give a simpler and unified
proof of the following results due to Vittone~\cite{vittoneArtTese}
and Di Scala, Olmos and Vittone~\cite{DOV}.

\begin{thm}\label{v-dov}
  Let $M$ be either:
  \begin{enumerate}
  \item[(a)] a connected complete Riemannian manifold of
    with nonzero constant index of $\kappa$-nullity, where $\kappa>0$; or
\item[(b)] a connected simply-connected irreducible homogeneous Riemannian manifold
    with nonzero index of $0$-nullity.
\end{enumerate}
    Then any two points of M can be joined by a
    piecewise smooth curve which is orthogonal to the distribution
    of $\kappa$-nullity at smooth points.
\end{thm}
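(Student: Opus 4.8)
The plan is to show that \emph{horizontal connectivity}---joinability by piecewise smooth curves tangent to the conullity distribution $\D=\N_\kappa^\perp$---is an equivalence relation all of whose classes are open; since $M$ is connected, this forces a single class. Reflexivity, symmetry (reverse the curve) and transitivity (concatenate) are immediate, so everything reduces to openness of the class $A(p)$ through a point. By the orbit theorem of Sussmann--Stefan, $A(p)$ is an immersed connected submanifold whose tangent distribution $\mathcal E$ is the smallest involutive distribution containing $\D$ and invariant under the local flows of sections of $\D$; in particular $\D\subseteq\mathcal E$, $[\mathcal E,\mathcal E]\subseteq\mathcal E$ and $[\D,\mathcal E]\subseteq\mathcal E$. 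It suffices to prove $\mathcal E=TM$, for then $A(p)$ is open. In both cases the index of $\kappa$-nullity is constant---by hypothesis in (a), automatically in the homogeneous case (b)---so $\N_\kappa$ is a genuine smooth autoparallel distribution whose leaves are complete totally geodesic submanifolds of constant curvature $\kappa$, as recalled in the introduction.

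Suppose, for contradiction, that $\mathcal E\neq TM$ at $p$, and pick a nonzero $\xi\in\mathcal E^\perp\subseteq\N_\kappa$. The first step is the standard splitting-tensor identity: for $X,Y\in\D$ one has $\langle[X,Y],\xi\rangle=\langle(C_\xi-C_\xi^{t})X,Y\rangle$, so that the $\N_\kappa$-component of $[X,Y]$ detected by $\xi$ is governed by the skew-symmetric part of the splitting tensor $C_\xi$. Since $[X,Y]\in\mathcal E$ and $\xi\perp\mathcal E$, invariance forces $C_\xi$ to be \emph{symmetric}.

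The second step is to run the Riccati equation for $C_\xi$ along the complete nullity geodesic $\gamma$ with $\gamma'(0)=\xi$: writing $C(t)=C_{\gamma'(t)}$ one obtains, on the conullity and in the sign convention fixed in Section~\ref{prelim}, an equation of Riccati type $C'=C^2+\kappa\,\mathrm{Id}$. Because $\gamma$ is complete and $C$ is a globally defined smooth tensor, $C(t)$ must exist for all $t$. When $\kappa>0$ this is already impossible for a symmetric $C(0)$: symmetry is preserved by the equation and each eigenvalue satisfies $c'=c^2+\kappa\ge\kappa>0$, hence escapes to $+\infty$ in finite time, contradicting global existence. This settles case (a) with no further hypotheses. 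When $\kappa=0$ the explicit solution $C(t)=C(0)(\mathrm{Id}-tC(0))^{-1}$ exists for all $t$ only if $C(0)$ has no nonzero real eigenvalue; a symmetric endomorphism with all real eigenvalues zero vanishes, so $C_\xi=0$ for every $\xi\in\mathcal E^\perp$.

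It remains to convert $C_\xi\equiv0$ on $\mathcal E^\perp$ into a contradiction in case (b). A short computation---using $C_\xi=0$, the autoparallelism of $\N_0$, and the involutivity $[\mathcal E,\mathcal E]\subseteq\mathcal E$, which yields $\langle\nabla_X\eta,\xi\rangle=0$ for $X\in\D$ and $\eta\in\mathcal E\cap\N_0$---shows that the subbundle $\mathcal E^\perp\subseteq\N_0$ is parallel. Hence $\mathcal E$ is parallel as well, and since $M$ is simply connected and complete, the de Rham decomposition theorem splits off a nontrivial flat factor tangent to $\mathcal E^\perp$, contradicting the assumed irreducibility. Thus $\mathcal E=TM$ in all cases, the classes are open, and any two points of $M$ are horizontally connected. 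I expect the \emph{main obstacle} to be precisely this last step for $\kappa=0$: the Riccati argument only delivers the pointwise vanishing $C_\xi=0$, and promoting it to parallelism of $\mathcal E^\perp$---so that irreducibility may be invoked---requires packaging the splitting-tensor relations carefully enough that no $\mathcal E\cap\N_0$-components leak into $\nabla\xi$.
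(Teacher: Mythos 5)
Your case (a) is correct and is in substance the paper's own argument: orthogonality of $\xi\in\mathcal E^\perp$ to all brackets $[X,Y]$, $X,Y\in\Gamma(\D)$, forces $C_\xi$ to be symmetric by~(\ref{symm}), and a symmetric, hence real-diagonalizable, splitting tensor is incompatible with global existence of the Riccati solution when $\kappa>0$ (Corollary~\ref{real-evs}(a)); the paper then concludes via bracket-generation and Chow--Rashevskii (Corollary~\ref{bg2}), while you conclude via openness of Sussmann orbits --- the same mechanism in different clothing. Your reduction of case (b) to $C_\xi=0$ for $\xi\in\mathcal E^\perp$ is likewise correct and coincides with the first half of Lemma~\ref{e2}.

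In case (b), however, there is a genuine gap, and it sits exactly where you yourself suspect it does. To prove that $\mathcal E^\perp$ is parallel you must show $\langle\nabla_Z\xi,W\rangle=0$ for every $Z\in TM$ and every $W\in\mathcal E=\D\oplus(\mathcal E\cap\N_0)$. Your three ingredients give only two of the three cases: $C_\xi=0$ together with autoparallelism of $\N_0$ handles $W\in\D$ for arbitrary $Z$, and your involutivity identity handles $W=\eta\in\mathcal E\cap\N_0$ for $Z=X\in\D$. The remaining case $Z=S\in\N_0$, $W=\eta\in\mathcal E\cap\N_0$ --- i.e.\ that the orthogonal splitting $\N_0=(\mathcal E\cap\N_0)\oplus\mathcal E^\perp$ is parallel \emph{along the nullity leaves} --- is untouched: $C_\xi=0$ says nothing about derivatives in nullity directions, autoparallelism only gives $\nabla_S\xi\in\N_0$, and the orbit theorem gives invariance of $\mathcal E$ under flows of sections of $\D$, not of $\N_0$. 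This is precisely where the paper does its real work, and it does so by abandoning the orbit distribution in favour of the bracket filtration $\D^r$ of~(\ref{D}) on the set of regular points (open and dense, and equal to $M$ by homogeneity): horizontal invariance of the stabilized orthogonal complement $\mathcal E^{n_r}$ is Lemma~\ref{er}, while invariance in nullity directions comes from the fact that membership in the filtration is detected by the splitting tensor, whose behaviour along nullity geodesics is governed by~(\ref{eq:spttns}); for instance, for a section $v$ with $C_v\equiv0$, equation~(\ref{eq:spttns}) gives $C_{\nabla_Sv}=0$, so $\ker C$ --- which by your own Riccati step equals $(\D^2)^\perp$ --- is parallel along the leaves of $\N_0$, and this propagates up the filtration. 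The orbit tangent spaces, by contrast, are built from flow pushforwards and admit no such splitting-tensor description, so your choice of $\mathcal E$ makes this step harder, not easier. (A smaller omission: you need $\mathcal E$ to have constant rank for $\mathcal E^\perp$ and its sections $\eta$ to make sense; in case (b) this follows from homogeneity, since isometries permute the orbits, but it must be said.) As written, case (b) is therefore incomplete; the natural repair is to rerun your argument with the stabilized bracket distribution $\D^{n_r}$ in place of the orbit distribution, which is exactly the paper's route.
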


We wish to thank Wolfgang Ziller for helpful comments. 

\section{Preliminaries}\label{prelim}

The splitting tensor of the nullity distribution
was introduced by Rosenthal in~\cite{RDefSplittingTensor} 
under the name 'conullity operator'; it plays a key role in this work.
Let $M$ be a connected Riemannian manifold, and let $\D$ be a smooth 
distribution on $M$. Consider the orthogonal splitting
$TM=\D\oplus\D^\perp$. It will be convenient to call the $\D$-component
(resp.~$\D^\perp$-component) of tangent vectors the \emph{horizontal}
(resp.~\emph{vertical}) component, and, for a vector field
$X\in\Gamma(TM)$, we shall write $X=X^h+X^v$. 
Now we can define the 
\emph{splitting tensor} of $\D^\perp$ as the map
\[ C:\Gamma(\D^\perp)\times\Gamma(\D)\to \Gamma(\D) \]
given by
\[ C(T,X)=-(\nabla_XT)^h=C_TX \]
(see~\cite[p.~186]{DT}).
It is clear that $C$ is $C^\infty(M)$-linear in each variable. 
Note that in case $\D$ is integrable, $C$ is nothing but the
shape operator of the leaves; further, this is the case
if and only if $C_{T_p}:\D_p\to\D_p$ is a symmetric
endomorphism for all $T\in\Gamma(\D^\perp)$ and all $p\in M$, since
\begin{equation}\label{symm}
\begin{aligned}
  \langle C_TX,Y\rangle - \langle X,C_TY\rangle &=-\langle\nabla_XT,Y\rangle
  +\langle X,\nabla_YT\rangle \\ 
  &=\langle T,\nabla_XY\rangle -\langle \nabla_YX,T\rangle \\ 
                                                & = \langle T, [X,Y] \rangle,
\end{aligned}
\end{equation}
                                                  for all $X$, $Y\in\Gamma(\D^ \perp)$.
                                                  Of course, $C$ vanishes identically if and only if $\D$ is autoparallel.

                                                  In the remainder
                                                  of this section, we assume that $\nu_{\kappa}(p)>0$ for some
                                                  $\kappa\in\R$
and for all $p\in M$,
and we let $\Delta\subset \N_\kappa$ be a nontrivial autoparallel
distribution. In~\cite[Lem., p.~474]{R2}
the following Ricatti-type ODE for the splitting tensor
is used (see also~\cite[Lem.~1]{ferus}).
\begin{prop}
The splitting tensor $C$ of $\Delta$ satisfies
	\begin{equation}\label{eq:spttns}
	\nabla_TC_S=C_SC_T+C_{\nabla_TS}+\kappa\left\langle T,S\right\rangle I
	\end{equation}
for all $S$, $T\in\Gamma(\Delta)$. In particular, 
the operator $C_{\gamma'}$, along a unit speed geodesic $\gamma$ 
in a leaf of $\Delta$, satisfies 
	\begin{equation}\label{eq:stde}
	(C_{\gamma'})'=C_{\gamma'}^2+\kappa I,
	\end{equation}
where the prime denotes covariant differentiation along $\gamma$.
\end{prop}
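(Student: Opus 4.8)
The plan is to prove \eqref{eq:spttns} as an identity of endomorphisms of $\Delta^\perp$ and then read off \eqref{eq:stde} as a special case. Throughout, $\nabla_T C_S$ denotes the covariant derivative of the endomorphism field $C_S$ of $\Delta^\perp$ in the direction $T$, taken with the connection induced on $\Delta^\perp$ (equivalently, the $\Delta^\perp$-component of the ambient derivative); this is also the meaning of the prime in \eqref{eq:stde}. Since the quantity $\nabla_T C_S-C_{\nabla_T S}$ is precisely the value on $S$ of the covariant derivative of the tensor $C$, and is therefore tensorial in both $S$ and $T$, it suffices to verify the claimed identity by applying both sides to an arbitrary horizontal field $X\in\Gamma(\Delta^\perp)$.

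The single geometric input is that $\Delta\subset\N_\kappa$: every $S\in\Gamma(\Delta)$ belongs to the $\kappa$-nullity, so for $T\in\Gamma(\Delta)$ and $X\in\Gamma(\Delta^\perp)$ the defining property of $\N_\kappa$ yields
\[ R(T,X)S=-\kappa\bigl(\langle T,S\rangle X-\langle X,S\rangle T\bigr)=-\kappa\langle T,S\rangle X, \]
where the last equality uses $\langle X,S\rangle=0$. I would then expand this same curvature term through $R(T,X)S=\nabla_T\nabla_X S-\nabla_X\nabla_T S-\nabla_{[T,X]}S$ and compare $\Delta^\perp$-components.

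Computing each piece, one systematically uses $(\nabla_X S)^h=-C_S X$ to rewrite the $\Delta^\perp$-component of any derivative of a $\Delta$-field along a $\Delta^\perp$-direction, and uses the autoparallelism of $\Delta$ to discard the $\Delta^\perp$-component of any derivative of a $\Delta$-field along a $\Delta$-direction (it is again vertical). Thus the first term $\nabla_T\nabla_X S$ contributes $-(\nabla_T C_S)X-C_S\bigl((\nabla_T X)^h\bigr)$; the second term $-\nabla_X\nabla_T S$ contributes $C_{\nabla_T S}X$, since $\nabla_T S\in\Gamma(\Delta)$; and, since the $\Delta^\perp$-component of $[T,X]$ equals $(\nabla_T X)^h+C_T X$, the bracket term $-\nabla_{[T,X]}S$ contributes $C_S\bigl((\nabla_T X)^h\bigr)+C_S C_T X$. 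Adding the three, the two copies of $C_S\bigl((\nabla_T X)^h\bigr)$ cancel, leaving $-(\nabla_T C_S)X+C_{\nabla_T S}X+C_S C_T X=-\kappa\langle T,S\rangle X$, which rearranges to \eqref{eq:spttns}. Finally, taking $S=T=\gamma'$ along a unit-speed geodesic $\gamma$ in a leaf of $\Delta$ gives $C_{\nabla_{\gamma'}\gamma'}=0$ and $\langle\gamma',\gamma'\rangle=1$, hence \eqref{eq:stde}.

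The step I expect to be the main obstacle is the bookkeeping of these horizontal/vertical projections, and in particular making transparent that the Leibniz term $C_S\bigl((\nabla_T X)^h\bigr)$ produced by differentiating the endomorphism $C_S$ is exactly cancelled by the term coming from the horizontal part $C_T X$ hidden inside $[T,X]$. This cancellation is what isolates the clean quadratic term $C_S C_T$, and it is forced by the definition of $C$ together with the autoparallelism of $\Delta$ rather than being a coincidence.
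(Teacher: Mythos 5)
Your proof is correct: expanding $R(T,X)S$ via the nullity hypothesis and the definition of $C$, with the autoparallelism of $\Delta$ killing the vertical contributions and the Leibniz term cancelling against the horizontal part of $[T,X]$, is exactly the standard derivation, and your reduction to \eqref{eq:stde} (tensoriality plus $\nabla_{\gamma'}\gamma'=0$, $\langle\gamma',\gamma'\rangle=1$) is sound. The paper itself gives no proof --- it cites Rosenthal \cite{R2} and Ferus \cite{ferus} --- and the argument found there is essentially the one you wrote.
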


In general, we shall use the name \emph{$\kappa$-nullity geodesic} to refer
to a geodesic contained in a leaf of $\kappa$-nullity. 

Following the ideas of \cite{trabBoys}, 
we can provide an explicit solution of equation~\eqref{eq:stde}.

\begin{prop}\label{eqn-c}
  Let $\gamma:[0,b) \to M$ be a nontrivial unit speed
  geodesic with $p=\gamma(0)$ and  
$\gamma'(0)\in\Delta_p$ so that $\gamma$ is a geodesic of the 
leaf of $\Delta$ through~$p$. Assume that $\gamma([0,b))$
is contained in an open subset of~$M$ where $\nu_\kappa$ is constant. 
Then the splitting tensor $C_{\gamma'(t)}=C(t)$ of $\Delta$ at $\gamma(t)$  
is given, in a parallel frame along $\gamma$,  by 
\begin{equation}\label{eq:splitting}
	C(t)=-J_0'(t)J_0(t)^{-1},
	\end{equation}
where
\begin{equation}\label{eq:defJacobi}
\begin{aligned}
J_0(t) &=	\begin{cases}
\cos(\sqrt\kappa t) I - \frac{\sin(\sqrt{\kappa}t)}{\sqrt\kappa}C_0 & \text{if $\kappa>0$,} \\
\cosh(\sqrt{-\kappa}t) I - \frac{\sinh(\sqrt{-\kappa}t)}{\sqrt{-\kappa}}C_0 & \text{if $\kappa<0$,} \\
I - tC_0 & \text{if $\kappa=0$,}
\end{cases}
\end{aligned}
\end{equation} 
and $C_0=C(0)$. In particular $J_0(t)$ is invertible for $t\in[0,b)$.
\end{prop}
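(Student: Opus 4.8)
The plan is to recognize \eqref{eq:stde} as a matrix Riccati equation and to linearize it by the classical substitution $C=-J'J^{-1}$. Working in the parallel frame along $\gamma$, covariant differentiation becomes ordinary differentiation, so that $C(t)$ and $J_0(t)$ are genuine matrix-valued functions of $t$. First I would verify the relevant algebraic identity: if $J(t)$ is invertible and we set $C:=-J'J^{-1}$, then, differentiating and using $(J^{-1})'=-J^{-1}J'J^{-1}$, one finds
\[ C'-C^2=-J''J^{-1}. \]
Hence $C$ solves $C'=C^2+\kappa I$ exactly when $J''=-\kappa J$, that is, when $J$ solves the matrix Jacobi equation $J''+\kappa J=0$. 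The initial condition $C(0)=C_0$ translates, once we fix the normalization $J(0)=I$, into $J'(0)=-C_0$.

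Next I would check that the explicit $J_0(t)$ in \eqref{eq:defJacobi} is precisely the solution of the initial value problem $J_0''+\kappa J_0=0$, $J_0(0)=I$, $J_0'(0)=-C_0$, in each of the three cases $\kappa>0$, $\kappa<0$, $\kappa=0$. This is a direct differentiation: for $\kappa>0$, say, $J_0'(t)=-\sqrt\kappa\sin(\sqrt\kappa t)I-\cos(\sqrt\kappa t)C_0$ gives $J_0(0)=I$, $J_0'(0)=-C_0$, and $J_0''=-\kappa J_0$; the hyperbolic and the linear cases have identical form.

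It remains to show that $J_0(t)$ is invertible on all of $[0,b)$, which I expect to be the only delicate point. Since $J_0(0)=I$, the function $J_0$ is invertible on a maximal subinterval $[0,t_0)\subseteq[0,b)$, and there $\tilde C:=-J_0'J_0^{-1}$ is well-defined, solves \eqref{eq:stde}, and satisfies $\tilde C(0)=-J_0'(0)J_0(0)^{-1}=C_0=C(0)$; by uniqueness of solutions of the Riccati ODE, $\tilde C\equiv C$ on $[0,t_0)$. Suppose, for contradiction, that $t_0<b$, so $J_0(t_0)$ is singular, and choose $0\neq v\in\ker J_0(t_0)$. If $J_0'(t_0)v$ were also zero, then $t\mapsto J_0(t)v$ would solve the vector Jacobi equation with vanishing zeroth- and first-order data at $t_0$, forcing $J_0(t)v\equiv0$ and contradicting $J_0(0)v=v\neq0$; hence $J_0'(t_0)v\neq0$. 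From $\tilde C(t)J_0(t)=-J_0'(t)$ we then obtain $\|\tilde C(t)\|\to\infty$ as $t\to t_0^-$. But $C(t)=\tilde C(t)$ remains smooth, in particular bounded, on $[0,b)$, because the hypothesis that $\nu_\kappa$ is constant along $\gamma([0,b))$ makes $\Delta$ autoparallel and $C$ a smooth tensor field there---a contradiction. Therefore $t_0\geq b$, so $J_0$ is invertible throughout $[0,b)$ and $C=-J_0'J_0^{-1}$ there, as claimed. The main obstacle is exactly this last step: the linearization and the verification of the closed form are routine, whereas ruling out that $J_0$ degenerates before time $b$ is what genuinely uses the geometric hypothesis on the constancy of $\nu_\kappa$.
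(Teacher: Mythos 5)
Your proof is correct; it cannot be compared line-by-line with the paper's, because the paper offers no proof of Proposition~\ref{eqn-c} at all --- it merely points to the ideas of~\cite{trabBoys}. The argument intended there (standard in the relative-nullity literature; cf.~the treatment of the splitting tensor in~\cite{DT}) reverses the logic of yours: one lets $J(t)$ be the solution of the \emph{linear} initial value problem $J'=-C(t)J$, $J(0)=I$, where $C(t)$ is the already-known smooth splitting tensor along $\gamma$; global existence on $[0,b)$ and invertibility then come for free, since $J$ is a fundamental solution and Liouville's formula gives $\det J(t)=\exp\left(-\int_0^t\mathrm{tr}\,C(s)\,ds\right)\neq 0$. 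Differentiating $J'=-CJ$ and substituting the Riccati equation~\eqref{eq:stde} yields $J''=-C'J-CJ'=-(C^2+\kappa I)J+C^2J=-\kappa J$, with $J(0)=I$, $J'(0)=-C_0$, so uniqueness for the linear Jacobi equation identifies $J$ with the explicit $J_0$ of~\eqref{eq:defJacobi}; this proves the invertibility of $J_0$ and formula~\eqref{eq:splitting} simultaneously. You instead start from $J_0$, match $-J_0'J_0^{-1}$ with $C$ on the maximal interval of invertibility via Riccati uniqueness, and exclude degeneration at some $t_0<b$ by showing $-J_0'J_0^{-1}$ would blow up there (your kernel-vector dichotomy) while $C$ stays bounded on $[0,t_0]$. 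That blow-up step is exactly the delicate point, and you execute it correctly; the cost of your route is this extra analysis, which the fundamental-solution route avoids entirely, while its benefit is that it isolates the geometric input very clearly, namely that $C$ is a smooth, hence locally bounded, tensor along all of $\gamma([0,b))$. One small precision: the constancy of $\nu_\kappa$ does not make $\Delta$ autoparallel --- that is a standing assumption of Section~\ref{prelim}; its role is rather to guarantee (in the main application $\Delta=\N_\kappa$) that the distribution, and with it $C$, is smooth along the whole geodesic, which is the property your boundedness argument actually uses.
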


Recall that the maximum number of linearly independent smooth vector
fields on $S^{m-1}$ is given by $\rho(m)-1$, where $\rho(m)$ is the
\emph{$m$th Radon-Hurwitz number}, defined as $2^c+8d$,
where $m=(\mathrm{odd})2^{c+4d}$ for $d\geq0$ and $0\leq c\leq 3$.  
The invertibility of $J_0(t)$ in Proposition~\ref{eqn-c} implies:

\begin{cor}\label{real-evs}
  Let  $\gamma:[0,b) \to M$ be as in Proposition~\ref{eqn-c}, where $b=\infty$.
\begin{enumerate}[(a)]
\item If $\kappa>0$, then the splitting tensor $C_{\gamma'}$ has no
  real eigenvalues. It follows that  $\rho(n-d)\geq d+1$, where
  $n=\dim M$ and $d=\dim\Delta$ 
\item If $\kappa\leq0$, then any real eigenvalue~$\lambda$ of $C_{\gamma'}$
  satisfies $|\lambda|\leq\sqrt{-\kappa}$.
\end{enumerate}
\end{cor}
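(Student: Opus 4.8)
The plan is to reduce everything to the behaviour of $J_0(t)$ on a single real eigenvector, using the explicit form~\eqref{eq:defJacobi} together with the invertibility of $J_0(t)$ on $[0,\infty)$ furnished by Proposition~\ref{eqn-c}. Suppose $v$ is a real eigenvector of $C_0=C(0)$ with real eigenvalue $\lambda$. Since each $J_0(t)$ is of the form $aI+bC_0$, the vector $v$ is also an eigenvector of $J_0(t)$, namely $J_0(t)v=f_\lambda(t)\,v$, where $f_\lambda$ is the scalar obtained by replacing $C_0$ by $\lambda$ in~\eqref{eq:defJacobi}:
\begin{equation*}
f_\lambda(t)=
\begin{cases}
\cos(\sqrt\kappa\,t)-\frac{\sin(\sqrt\kappa\,t)}{\sqrt\kappa}\,\lambda & \text{if }\kappa>0,\\
\cosh(\sqrt{-\kappa}\,t)-\frac{\sinh(\sqrt{-\kappa}\,t)}{\sqrt{-\kappa}}\,\lambda & \text{if }\kappa<0,\\
1-t\lambda & \text{if }\kappa=0.
\end{cases}
\end{equation*}
Because $J_0(t)$ is invertible for every $t\in[0,\infty)$, I must have $f_\lambda(t)\neq0$ for all such $t$; each item then becomes a question of deciding for which $\lambda$ this nonvanishing is possible.

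\emph{Part (a).} For $\kappa>0$ I would rewrite $f_\lambda(t)=\sqrt{1+\lambda^2/\kappa}\,\cos(\sqrt\kappa\,t+\phi)$ for a suitable phase $\phi$; being a nonzero constant times a cosine, it necessarily vanishes for some $t\in[0,\infty)$, contradicting $f_\lambda\neq0$. Hence $C_0$ has no real eigenvalue. Applying this along the nullity geodesic issuing in each direction $T\in\Delta_p$ (each of the type in Proposition~\ref{eqn-c} with $b=\infty$), I obtain that $C_T$ has no real eigenvalue for every $T\neq0$. To extract the Radon--Hurwitz bound I would fix a basis $T_1,\dots,T_d$ of $\Delta_p$ and, for a unit vector $u$ in the $(n-d)$-dimensional space $\Delta_p^\perp$, set
\begin{equation*}
X_i(u)=C_{T_i}u-\langle C_{T_i}u,u\rangle\,u,\qquad i=1,\dots,d,
\end{equation*}
which are tangent to the unit sphere $S^{n-d-1}\subset\Delta_p^\perp$. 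If $\sum_i a_iX_i(u)=0$, then $C_{\sum_i a_iT_i}u$ is a real multiple of $u$; since $C_T$ has no real eigenvalue for $T\neq0$, this forces $\sum_i a_iT_i=0$, i.e.\ $a_1=\cdots=a_d=0$. Thus $X_1,\dots,X_d$ are $d$ pointwise linearly independent tangent vector fields on $S^{n-d-1}$, whence $d\le\rho(n-d)-1$, that is, $\rho(n-d)\ge d+1$.

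\emph{Part (b).} For $\kappa\le0$ I would note that $f_\lambda(0)=1>0$, so $f_\lambda\neq0$ on $[0,\infty)$ is equivalent to $f_\lambda>0$ throughout. When $\kappa<0$, setting $f_\lambda(t)=0$ for $t>0$ amounts to $\coth(\sqrt{-\kappa}\,t)=\lambda/\sqrt{-\kappa}$, which is solvable exactly when $\lambda>\sqrt{-\kappa}$ (as $\coth$ maps $(0,\infty)$ onto $(1,\infty)$); when $\kappa=0$, $f_\lambda$ has a zero in $(0,\infty)$ exactly when $\lambda>0$. In either case nonvanishing forces $\lambda\le\sqrt{-\kappa}$. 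Running the same argument on the geodesic traversed in the opposite direction, whose splitting tensor at $p$ is $C_{-\gamma'(0)}=-C_0$ with eigenvalue $-\lambda$, gives $-\lambda\le\sqrt{-\kappa}$, and therefore $|\lambda|\le\sqrt{-\kappa}$.

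The step I expect to be the main obstacle is the passage in (a) from ``no real eigenvalue in every nullity direction'' to the inequality $\rho(n-d)\ge d+1$: one has to verify that the projected fields $X_i$ are genuinely tangent and pointwise independent, and to invoke correctly the theorem that $\rho(m)-1$ is the maximal number of such fields on $S^{m-1}$. A secondary delicate point, in (b), is that the forward geodesic alone yields only the one-sided bound $\lambda\le\sqrt{-\kappa}$, so the reversed geodesic---legitimate once the nullity leaf is complete---is indispensable for the full estimate $|\lambda|\le\sqrt{-\kappa}$.
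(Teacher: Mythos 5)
Your proof is correct and follows essentially the same route as the paper: the eigenvalue bounds come from the non-vanishing of the scalar $f_\lambda(t)$ (i.e., the invertibility of $J_0(t)$, which acts as $aI+bC_0$ on a real eigenvector of $C_0$), and the Radon--Hurwitz bound is exactly Ferus's argument of projecting $C_{T_1}X,\dots,C_{T_d}X$ to a pointwise independent tangent frame on the unit sphere of $\Delta^\perp$. Your closing caveat on (b) is apt: the paper leaves this step implicit, and the two-sided bound $|\lambda|\le\sqrt{-\kappa}$ genuinely requires running the argument on the reversed geodesic (hence backward extendability within the constant-nullity set), since non-vanishing of $f_\lambda$ on $[0,\infty)$ alone only excludes $\lambda>\sqrt{-\kappa}$.
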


\Pf The assertion about the Radon-Hurwitz number in~(a) goes as follows
(cf.~\cite[Thm.~1]{ferus}). Fix an orthonormal basis $T_1,\ldots,T_d$ of
$\Delta$. For every unit $X\in\Delta^\perp$, the list $X$, $C_{T_1}X,\ldots,C_{T_d}X$
must be linearly independent, for otherwise $C_T$ would have a real eigenvalue
for some $T\in\Delta$. Now $C_{T_1}X,\ldots,C_{T_d}X$ projects to a
global frame on the unit sphere of $\Delta^\perp$. \EPf

\medskip

Recall that a smooth distribution $\D$ on $M$ is called
\emph{bracket-generating} if the iterated Lie brackets
of smooth sections of $\D$ eventually span the whole $TM$.
More precisely, we identify $\D$ with its sheaf of smooth local sections, 
and define $\D^1=\D$ and 
\begin{equation}\label{D}
  \D^{r+1} = \D^r + [\D,\D^r] = [\D,\D^r]
  \end{equation}
for $r\geq1$, where
\[ [\D,\D^r] = \{[X,Y]:X\in\D,\ Y\in\D^r\}. \]
Note that $\D^r$ for $r\geq2$ in general has variable rank.
We say that $\D$ is \emph{bracket-generating of step~$r$}
if, for some $r\geq2$, we have $\D^r=TM$ and $r$ is the minimal integer
satisfying this condition. 

\begin{cor}\label{bg2}
  If $M$ is complete,
  $\kappa>0$ and $\nu_\kappa$ is constant,
  then $\D:=\mathcal N_\kappa^\perp$ is bracket-generating of step~$2$.
\end{cor}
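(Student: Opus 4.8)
The plan is to reduce the statement to the antisymmetry of the splitting tensor and then invoke Corollary~\ref{real-evs}(a). Since $\nu_\kappa$ is constant and positive, $\Delta:=\N_\kappa$ is an honest constant-rank autoparallel distribution whose leaves are totally geodesic, and $\D=\N_\kappa^\perp$ is its orthogonal complement. To prove that $\D$ is bracket-generating of step~$2$ I must show $\D+[\D,\D]=TM$ at every point; as $\D$ already spans itself, this amounts to showing that the $\N_\kappa$-components of brackets $[X,Y]$, with $X,Y\in\Gamma(\D)$, span all of $\N_\kappa$ pointwise. The decisive observation is that, applying~\eqref{symm} to $\D$ itself (so that its orthogonal complement is the nullity and $C_T$ is the splitting tensor of $\N_\kappa$ featuring in the preceding results, an endomorphism of $\D$), for $X,Y\in\Gamma(\D)$ and $T\in\Gamma(\N_\kappa)$ one has
\[ \langle T,[X,Y]\rangle=\langle C_TX,Y\rangle-\langle X,C_TY\rangle=\langle(C_T-C_T^*)X,Y\rangle, \]
so that the $\N_\kappa$-component of $[X,Y]$ is governed entirely by the antisymmetric part of the (tensorial, $C^\infty$-linear) endomorphism $C_T$ of $\D$.

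The heart of the matter is thus to verify that $C_T$ fails to be symmetric for every nonzero $T\in\N_\kappa|_p$. Here is where $\kappa>0$ and completeness enter. I fix $p$ and a unit $T\in\N_\kappa|_p$, and let $\gamma$ be the geodesic of $M$ with $\gamma(0)=p$ and $\gamma'(0)=T$; since $\Delta$ is autoparallel its leaf is totally geodesic, so $\gamma$ is a nullity geodesic, and completeness of $M$ gives that $\gamma$ is defined on $[0,\infty)$, i.e.\ $b=\infty$. As $\nu_\kappa$ is constant on all of $M$, the curve $\gamma$ satisfies the hypotheses of Proposition~\ref{eqn-c}, and Corollary~\ref{real-evs}(a) then yields that $C_{\gamma'}$, and in particular $C_T=C_{\gamma'(0)}$, has no real eigenvalues. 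Since a symmetric endomorphism of a real inner product space is diagonalizable with real eigenvalues, $C_T$ cannot be symmetric; rescaling, the same holds for every nonzero $T\in\N_\kappa|_p$.

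Combining the two steps finishes the argument. I fix $p$ and let $V\subseteq\N_\kappa|_p$ be the span of the $\N_\kappa$-components of $\{[X,Y]_p:X,Y\in\D\}$. If $V$ were a proper subspace, there would be a nonzero $T\perp V$, for which $\langle T,[X,Y]\rangle=0$ for all $X,Y\in\Gamma(\D)$; by the displayed identity this forces $C_T=C_T^*$, contradicting the previous paragraph. Hence $V=\N_\kappa|_p$ and $\D_p+[\D,\D]_p=T_pM$ for every $p$, that is, $\D^2=TM$; since $\nu_\kappa>0$ makes $\D\neq TM$, the integer $2$ is minimal and $\D$ is bracket-generating of step~$2$. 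The only genuinely nontrivial input is the non-symmetry of $C_T$, and I expect this to be the main obstacle: it rests on the positivity of $\kappa$ through the explicit solution~\eqref{eq:splitting}, since a real eigenvalue of $C_0$ would make $J_0(t)=\cos(\sqrt\kappa t)I-\frac{\sin(\sqrt\kappa t)}{\sqrt\kappa}C_0$ singular for some $t>0$, which completeness rules out; this is precisely the step that would break down for $\kappa\leq0$.
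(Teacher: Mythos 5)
Your proof is correct and is essentially the paper's own argument: both use the identity~(\ref{symm}) to show that any $T$ orthogonal to $\D^2$ has symmetric splitting tensor $C_T$, hence real eigenvalues, which contradicts Corollary~\ref{real-evs}(a) since completeness and constancy of $\nu_\kappa$ make every nullity geodesic complete. The only difference is presentational --- you phrase it as the non-symmetry of $C_T$ for every nonzero $T\in\N_\kappa$, while the paper directly takes $T_p\perp\D^2_p$ and concludes $T_p=0$ --- but the mathematical content is identical.
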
                                                  

\begin{proof}
  The calculation~(\ref{symm}) shows that $C_{T_p}$ is symmetric for
  $T_p\perp\D^2_p$ and $p\in M$, and thus has all eigenvalues real.
  Now Corollary~\ref{real-evs} implies $T_p=0$. 
\end{proof}

\begin{rmk}
The above
  arguments also easily imply 
  Theorem~4 in~\cite{RukimbiraNull}, which states that
  for a compact $2n+1$-dimensional
  Sasakian manifold $M$ with constant $\nu_\kappa>0$ for some $\kappa>0$
  either $\nu_\kappa\leq n$ or $M$ has constant
  curvature $\kappa$. Indeed, assume $M$ has not constant curvature
  and apply Corollary~\ref{real-evs}(a)
  to it to obtain 
  \[ \rho(2n+1-\nu_\kappa) \geq \nu_\kappa + 1. \]
  Now the desired result immediately follows from
  the trivial estimate $m\geq\rho(m)$ for all $m$.
\end{rmk}

The case $\kappa=0$ of~(\ref{scal2}) below can be found 
in~\cite[ch.~4, \S4.1]{brooks}. 

  \begin{lem}\label{lem:TraceSplitting}
    Let $\gamma: [0,b) \rightarrow M$ with $\gamma'=T\in\Delta$ be as
    in Proposition~\ref{eqn-c}. 
    Denote the scalar curvature of $M$ by $\mathrm{scal}$, 
and put $n=\dim M$ and
    $d=\dim\Delta$.   Then  
    \begin{equation}\label{scal}
 \frac12\frac{d}{dt}\mathrm{scal} = -\kappa (n-d-1) \mathrm{tr}\,C_T +
      \sum_{i\neq j} \langle R(C_T X_i, X_j)X_j,X_i\rangle,
\end{equation}
    where $\{X_i\}_{i=1}^{n-d}$ is a parallel orthonormal frame of $\Delta^{\perp}$
    along $\gamma$.
    
In particular, in case $\Delta=\mathcal N_\kappa$, and $\nu_\kappa=n-2$ along~$\gamma$, we have
\begin{equation}\label{scal2}
 \frac12\frac{d}{dt}\mathrm{scal} = \mathrm{tr}\,C_T (K_{\mathcal D}-\kappa), 
\end{equation}
where $K_{\mathcal D}$ denotes the sectional curvature of the $2$-plane
distribution $\mathcal D=\mathcal N^\perp$. 

Further, if in addition $\mathrm{scal}$ is constant,
then $\mathrm{tr}\,C_T = 0$ and
$\det C_T = \kappa$ along~$\gamma$. 
\end{lem}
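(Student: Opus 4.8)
The plan is to establish the three assertions in turn: obtain \eqref{scal} by differentiating the scalar curvature along $\gamma$, specialize it to \eqref{scal2}, and then feed the result into the Riccati equation \eqref{eq:stde} for the constant-scalar-curvature conclusion.

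First I would prove \eqref{scal}. Adjoin to the parallel frame $\{X_i\}_{i=1}^{n-d}$ of $\Delta^\perp$ a parallel orthonormal frame $\{T_\alpha\}$ of $\Delta\subset\mathcal N_\kappa$ to obtain a parallel frame $\{E_a\}$ of $TM$ along $\gamma$, and write $\mathrm{scal}=\sum_{a,b}\langle R(E_a,E_b)E_b,E_a\rangle$. Since the frame is parallel, $\frac{d}{dt}\mathrm{scal}=\sum_{a,b}(\nabla_TR)(E_a,E_b,E_b,E_a)$. The substantial step is to rewrite $\nabla_TR$ using the second Bianchi identity $(\nabla_TR)(X,Y)=-(\nabla_XR)(Y,T)-(\nabla_YR)(T,X)$ together with the defining nullity relation $R(\cdot,T)\cdot=R^\kappa(\cdot,T)\cdot$, where $R^\kappa(x,y)z=-\kappa(\langle x,z\rangle y-\langle y,z\rangle x)$, and the identity $(\nabla_XT)^h=-C_TX$ for the splitting tensor. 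Expanding each $(\nabla_XR)(Y,T)Z$ by the Leibniz rule, the terms in which $T$ stays in a curvature slot are purely algebraic (constant-curvature) and assemble into the $\kappa$-term, while the term $R(Y,\nabla_XT)Z$ contributes genuine curvature through its horizontal part $-C_TX$. Contracting over $a,b$, with the nullity-index contributions collapsing into the $\kappa$-term, then yields \eqref{scal}; I expect the bookkeeping of this contraction, in particular the emergence of the coefficient $n-d-1$ and the factor $\tfrac12$, to be the main obstacle.

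Next, \eqref{scal2} is a specialization: with $\Delta=\mathcal N_\kappa$ and $d=n-2$ we have $n-d-1=1$ and $\Delta^\perp=\mathcal D$ is a $2$-plane with frame $X_1,X_2$. As $C_T$ preserves $\mathcal D$, write $C_TX_1=aX_1+bX_2$ and $C_TX_2=cX_1+eX_2$; in $\sum_{i\neq j}\langle R(C_TX_i,X_j)X_j,X_i\rangle$ only $a\langle R(X_1,X_2)X_2,X_1\rangle$ and $e\langle R(X_2,X_1)X_1,X_2\rangle$ survive, each equal to the respective coefficient times $K_{\mathcal D}$ by the symmetries of $R$, so the sum is $(a+e)K_{\mathcal D}=\mathrm{tr}\,C_T\cdot K_{\mathcal D}$. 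Together with the term $-\kappa\,\mathrm{tr}\,C_T$ this gives \eqref{scal2}.

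Finally, for the last assertion, constancy of $\mathrm{scal}$ and \eqref{scal2} force $\mathrm{tr}\,C_T\,(K_{\mathcal D}-\kappa)=0$ at every point of $\gamma$. The crucial observation is that $K_{\mathcal D}\neq\kappa$ along $\gamma$: if $K_{\mathcal D}(p)=\kappa$, then the algebraic curvature tensor $\tilde R_p:=R_p-R^\kappa_p$ would vanish whenever one of its arguments lies in $\mathcal N_\kappa$ (by the symmetries of a curvature tensor together with the nullity condition), hence be supported on the $2$-plane $\mathcal D_p$, on which it has sectional curvature $K_{\mathcal D}(p)-\kappa=0$; since an algebraic curvature tensor on a $2$-dimensional space is determined by its single sectional curvature, $\tilde R_p=0$, which would force $\nu_\kappa(p)=n$ and contradict $\nu_\kappa=n-2$. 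Hence $\mathrm{tr}\,C_T=0$ along $\gamma$, so $(\mathrm{tr}\,C_T)'=0$; taking the trace of \eqref{eq:stde} and using $\mathrm{tr}(C_T^2)=(\mathrm{tr}\,C_T)^2-2\det C_T$ for the $2\times2$ operator $C_T$ gives $0=\mathrm{tr}(C_T^2)+2\kappa=-2\det C_T+2\kappa$, whence $\det C_T=\kappa$, as claimed.
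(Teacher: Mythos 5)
Your route coincides with the paper's own: \eqref{scal} via the second Bianchi identity, the nullity relation and the splitting tensor; \eqref{scal2} by specializing to conullity two; and the final claim by combining $K_{\mathcal D}\neq\kappa$ with the trace of the Riccati equation \eqref{eq:stde}. Your second and third parts are complete and correct --- indeed your justification that $K_{\mathcal D}\neq\kappa$ along $\gamma$ (the tensor $R-R^\kappa$ vanishes whenever an argument lies in $\mathcal N_\kappa$, hence is determined by its single sectional curvature on the $2$-plane $\mathcal D$) spells out a point the paper leaves implicit, and your identity $\mathrm{tr}(C_T^2)=-2\det C_T$ for traceless $C_T$ is exactly the paper's use of the characteristic polynomial $C_T^2+(\det C_T)I=0$.

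The genuine gap is in the first part: you never derive \eqref{scal}. You list the correct ingredients and then defer precisely the step where the lemma's content lies, writing that you ``expect the bookkeeping \ldots to be the main obstacle.'' That bookkeeping collapses once the computation is organized as in the paper. In a parallel frame adapted to $\Delta\oplus\Delta^\perp$, split $\mathrm{scal}$ into the blocks $\sum_{i\neq j}\langle R(T_i,T_j)T_j,T_i\rangle$, $\sum_{i,j}\langle R(X_i,T_j)T_j,X_i\rangle$ and $\sum_{i\neq j}\langle R(X_i,X_j)X_j,X_i\rangle$; since $\Delta\subset\mathcal N_\kappa$, the first two blocks are constant ($\kappa$ per term) along $\gamma$, so only the conullity block needs differentiating. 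The second Bianchi identity then gives
\begin{equation*}
\frac{d}{dt}\mathrm{scal}=-2\sum_{i\neq j}\langle \nabla_{X_j}R(T,X_i)X_j,X_i\rangle,
\end{equation*}
which is the source of the factor $\tfrac12$; a single Leibniz expansion of $\nabla_{X_j}R(T,X_i)X_j$, using $R(T,\cdot)\cdot=-\kappa(T\wedge\cdot)\cdot$ and $\nabla_{X_j}T=-C_TX_j+(\nabla_{X_j}T)^v$ (the vertical part contributes nothing, again by nullity), leaves exactly $\kappa\langle C_TX_j,X_j\rangle X_i+R(C_TX_j,X_i)X_j$. Summing over the $n-d-1$ indices $i\neq j$ for each $j$, and rewriting the curvature term by the symmetries of $R$, yields \eqref{scal} with its coefficient $-\kappa(n-d-1)$. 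Until this computation is actually carried out, your first paragraph is a plan rather than a proof; with it, your argument and the paper's are the same.
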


\begin{proof}
  Let $\{T=T_1,T_2,\ldots,T_d\}$ be a parallel orthonormal frame of
  $\Delta$ along $\gamma$. We compute
  \begin{equation*}\begin{split}
      \mathrm{scal} &= \sum_{i\neq j} \langle R(T_i,T_j)T_j,T_i\rangle
      \\
      &\qquad+ \sum_{i,j} \langle R(X_i,T_j)T_j,X_i\rangle +
      \sum_{i \neq j} \langle R(X_i,X_j)X_j,X_i\rangle.
      \end{split}\end{equation*}
    Since the first two sums on the right-hand side
    are constant along $\gamma$,
    we get
    \begin{align}\label{eqn1}\nonumber
	\frac{d}{dt}\mathrm{scal} & =\sum_{i\neq j}
        \langle \nabla_TR(X_i,X_j)X_j,X_i\rangle \\ \nonumber
                                  &=-\sum_{i\neq j}
      \langle \nabla_{X_i}R(X_j,T)X_j,X_i\rangle +
      \langle \nabla_{X_j}R(T,X_i)X_j,X_i\rangle\\
&=-2\sum_{i\neq j} \langle \nabla_{X_j}R(T,X_i)X_j,X_i\rangle,
\end{align}
where we have used the Bianchi identity and other symmetries of 
$R$.  

Next, since $T\in\mathcal N_{\kappa}$, we can write
   \begin{align*}
 \nabla_{X_j}R(T,X_i)X_j&=-\nabla_{X_j}(\kappa(T\wedge X_i)X_j)-
R(\nabla_{X_j}T,X_i)X_j \\
&\qquad +\kappa(T\wedge\nabla_{X_j}X_i)X_j+
\kappa(T\wedge X_i)\nabla_{X_j}X_j\\
&=\kappa\langle C_TX_j,X_j\rangle X_i+R(C_TX_j,X_i)X_j.
   \end{align*}
Substituting into (\ref{eqn1}) yields~(\ref{scal}).   

In case $d=\nu_\kappa=n-2$ we have 
\begin{align*}
 \langle R(C_TX_1,X_2)X_2,X_1\rangle + \langle R(C_TX_2,X_1)X_1,X_2)\rangle
&= \langle R(X_1,X_2)X_2,C_TX_1\rangle \\
&\qquad+ \langle R(X_2,X_1)X_1,C_TX_2\rangle\\
&= (\mathrm{tr}\,C_T) K_{\mathcal D}, 
\end{align*}
and~(\ref{scal2}) follows. 

Since $\nu_\kappa=n-2$ along~$\gamma$, we have $K_{\mathcal D}\neq\kappa$
along~$\gamma$. Therefore, in case 
$\mathrm{scal}$ is constant, equation~(\ref{scal2}) yields
$\mathrm{tr}\,C_T=0$ along $\gamma$. Finally, take the trace in~(\ref{eq:stde})
and use the characteristic polynomial $C_T^2+(\det C_T)I=0$ to obtain 
$\det C_T=\kappa$ along~$\gamma$. 
\end{proof}

\begin{lem}\label{tr-spl-kappa0}
  Assume $\kappa\leq0$, $\gamma$ is a complete
  $\kappa$-nullity geodesic, $\nu_\kappa=n-2$ and 
  $K_{\mathcal D}$ is bounded away from $\kappa$ along~$\gamma$. Then $\mathrm{tr}\,C(t)=0$ and $\det
  C(t)=\kappa$ for all~$t\in\mathbb R$.
\end{lem}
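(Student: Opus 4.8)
The plan is to combine the explicit solution of the splitting-tensor ODE from Proposition~\ref{eqn-c} with the scalar-curvature identity~\eqref{scal2}, converting both hypotheses (completeness of $\gamma$ and $K_{\mathcal D}$ bounded away from $\kappa$) into growth constraints on the single scalar function $g(t):=\det J_0(t)$. First I would note that, since $\gamma$ is complete and $\nu_\kappa=n-2$ is constant along it, the operator $C(t)=C_{\gamma'(t)}$ is defined for every $t\in\mathbb R$; by~\eqref{eq:splitting} this forces $J_0(t)$ to be invertible for all $t$, so $g$ never vanishes and, as $g(0)=\det I=1$, in fact $g(t)>0$ for all $t$. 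Jacobi's formula applied to $C(t)=-J_0'(t)J_0(t)^{-1}$ then gives the basic identity $\mathrm{tr}\,C(t)=-(\ln g)'(t)$.

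The second step is to feed~\eqref{scal2} into this. Along $\gamma$ the scalar curvature equals a constant plus $2K_{\mathcal D}$ (all terms of $\mathrm{scal}$ involving a nullity direction are constant along $\gamma$, exactly as in the proof of Lemma~\ref{lem:TraceSplitting}), so~\eqref{scal2} reads $(K_{\mathcal D}-\kappa)'=\mathrm{tr}\,C\,(K_{\mathcal D}-\kappa)$. Since $\nu_\kappa=n-2$ forces $K_{\mathcal D}\neq\kappa$, this linear ODE integrates, using $\mathrm{tr}\,C=-(\ln g)'$ and $g(0)=1$, to $(K_{\mathcal D}(t)-\kappa)\,g(t)\equiv K_{\mathcal D}(0)-\kappa$. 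Hence $g(t)=(K_{\mathcal D}(0)-\kappa)/(K_{\mathcal D}(t)-\kappa)$, and the hypothesis that $K_{\mathcal D}$ is bounded away from $\kappa$ translates precisely into: $g$ is bounded above on $\mathbb R$.

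Now I would exploit the explicit form of $g$. Writing $s=\sqrt{-\kappa}$ and expanding the $2\times2$ determinant of~\eqref{eq:defJacobi}, one finds for $\kappa<0$ that
\[ g(t)=\tfrac12\bigl(1-\tfrac{\det C_0}{s^2}\bigr)+\tfrac12\bigl(1+\tfrac{\det C_0}{s^2}\bigr)\cosh(2st)-\tfrac{\mathrm{tr}\,C_0}{2s}\sinh(2st), \]
that is, $g(t)=p\,e^{2st}+q\,e^{-2st}+\gamma_0$ for suitable constants $p,q,\gamma_0$; when $\kappa=0$ the function $g(t)=1-t\,\mathrm{tr}\,C_0+t^2\det C_0$ is a polynomial of degree at most two. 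Positivity of $g$ (from completeness) forces the leading coefficients to be $\geq0$, while boundedness above (from the $K_{\mathcal D}$ hypothesis) forces them to be $\leq0$; hence $p=q=0$ (resp.\ $\det C_0=\mathrm{tr}\,C_0=0$), so $g\equiv g(0)=1$. This is the crux of the argument, and it is exactly here that both hypotheses are indispensable: completeness alone permits the unbounded solution $g(t)=e^{-2st}$ (take $C_0=sI$), while the $K_{\mathcal D}$-bound alone does not prevent $g$ from touching zero.

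Finally, $g\equiv1$ yields $\mathrm{tr}\,C(t)=-(\ln g)'(t)=0$ for all $t$. Taking the trace of the Riccati equation~\eqref{eq:stde}, namely $(\mathrm{tr}\,C)'=\mathrm{tr}(C^2)+2\kappa$, and using that a trace-free $2\times2$ operator satisfies $\mathrm{tr}(C^2)=-2\det C$ by Cayley--Hamilton, gives $0=-2\det C+2\kappa$, whence $\det C(t)=\kappa$ for all $t$. The main obstacle is the growth dichotomy of the third step; once~\eqref{scal2} is rewritten as a linear ODE for $K_{\mathcal D}-\kappa$, the remainder is bookkeeping.
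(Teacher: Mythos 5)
Your proof is correct and takes essentially the same route as the paper's: both integrate~\eqref{scal2} via Jacobi's formula to obtain $(K_{\mathcal D}(t)-\kappa)\det J_0(t)=K_{\mathcal D}(0)-\kappa$, and then use the explicit form of $\det J_0$ to force the coefficients of the growing terms to vanish, so that $\det J_0\equiv 1$. The only cosmetic differences are that you phrase the dichotomy as positivity-plus-boundedness-above rather than boundedness of $|\det J_0(t)|$, and you recover $\det C(t)=\kappa$ from the trace of the Riccati equation~\eqref{eq:stde} (exactly as the paper does at the end of the proof of Lemma~\ref{lem:TraceSplitting}) instead of appealing to the arbitrariness of the initial point along~$\gamma$.
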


\begin{proof} Note that $\frac12\mathrm{scal}=K_{\mathcal D}+m\kappa$,
  where $m=\frac{n^2-n}2-1$. 
  Using
equations~(\ref{eq:splitting}), (\ref{scal2}) and Jacobi's formula, we obtain
\begin{equation*}
    \frac{d}{dt}(K_{\mathcal D}-\kappa) = \mathrm{tr}(-J_0'J_0^{-1})(K_{\mathcal D}-\kappa)\\
    =-\frac{\frac{d}{dt}\det J_0}{\det J_0}(K_{\mathcal D}-\kappa).
  \end{equation*}
Integration of this equation yields
\[ K_{\mathcal D}(t)-\kappa=(K_{\mathcal D}(0)-\kappa)|\det J_0(t)|^{-1}. \]
 
Now $\det J_0(t)$ equals
\[ 1-(\mathrm{tr}\,C_0)t+(\det C_0)t^2 \]
if $\kappa=0$, and 
\[ \frac12\left(1+\frac{\det C_0}{\kappa}\right)+ \frac14\left(1-\frac{\det C_0}{\kappa}-
\frac{\mathrm{tr}\,C_0}{\sqrt{-\kappa}}\right)e^{2t}+
\frac14\left(1-\frac{\det C_0}{\kappa}+\frac{\mathrm{tr}\,C_0}{\sqrt{-\kappa}}\right)e^{-2t} \]
if $\kappa<0$. Since $K_{\mathcal D}$ is bounded away from~$\kappa$,
and the initial point is arbitrary along $\gamma$,
the desired result follows.
    \end{proof}

\begin{lem}\label{div}
Let $M$ be a complete Riemannian $n$-manifold of finite volume and
minimal $(-1)$-nullity~$1$. Then $\mathrm{div}\,T=0$, where~$T$ 
is a unit vector field, tangent to the nullity, defined 
in the open set of minimal nullity. 
\end{lem}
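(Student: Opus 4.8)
The plan is to reduce the statement to a property of the splitting tensor $C_T$ of the rank-one nullity $\Delta=\N_{-1}$ on the open set $\Omega$ of minimal nullity, and then to extract the conclusion from the finiteness of the volume by an expansion/contraction estimate for the nullity-geodesic flow. First I would record the infinitesimal meaning of $\mathrm{div}\,T$. On $\Omega$ the distribution $\N_{-1}$ is autoparallel of rank one, and since $T$ is unit with $\nabla_TT\in\N_{-1}$ orthogonal to $T$, the integral curves of $T$ are complete nullity geodesics which, by the completeness of the nullity leaves recalled in the Introduction (see~\cite{maltzCompl}), remain in $\Omega$; hence $T$ generates a one-parameter group $\phi_t$ of diffeomorphisms of $\Omega$. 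Choosing a parallel orthonormal frame $T,X_1,\dots,X_{n-1}$ with $X_j\in\D=\N_{-1}^\perp$ and using $C_TX=-(\nabla_XT)^{\D}$ for $X\in\D$ gives
\[ \mathrm{div}\,T=\langle\nabla_TT,T\rangle+\sum_j\langle\nabla_{X_j}T,X_j\rangle=-\mathrm{tr}\,C_T. \]
Along a nullity geodesic $\gamma$ with $\gamma'(0)=T_p$, formula~\eqref{eq:splitting} yields $C(t)=-J_0'(t)J_0(t)^{-1}$, so $\mathrm{div}\,T(\gamma(t))=-\mathrm{tr}\,C(t)=\tfrac{d}{dt}\log\det J_0(t)$; since $\det J_0(0)=1$, the Jacobian of $\phi_t$ at $p$ with respect to the Riemannian volume is exactly $\det J_0^{(p)}(t)>0$.

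The core of the argument is then a finite-volume obstruction. Suppose $\mathrm{div}\,T\not\equiv0$, and let $\mu_1,\dots,\mu_{n-1}$ be the eigenvalues of $C_0=C_{T_p}$; by Corollary~\ref{real-evs}(b) the real ones lie in $[-1,1]$, which is also what guarantees that $\det J_0^{(p)}(t)$ has no zero on $\R$. From~\eqref{eq:defJacobi} with $\kappa=-1$ one has
\[ \det J_0^{(p)}(t)=\prod_i\Bigl(\tfrac{1-\mu_i}{2}\,e^{t}+\tfrac{1+\mu_i}{2}\,e^{-t}\Bigr), \]
whose leading behaviour as $t\to+\infty$ is $c\,e^{(n-1-2m_1)t}$, where $m_1$ is the multiplicity of the eigenvalue $1$ and $c>0$, and as $t\to-\infty$ is $c'\,e^{(n-1-2m_{-1})|t|}$ with $m_{-1}$ the multiplicity of $-1$ and $c'>0$ (positivity of $c,c'$ uses $\mu_i\in[-1,1]$ for the real eigenvalues and conjugacy for the complex ones). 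Thus on the measurable set $A^{+}=\{p:\ n-1-2m_1(p)>0\}$ we have $\det J_0^{(p)}(t)\to\infty$, and since
\[ \mathrm{vol}\bigl(\phi_t(A^{+})\bigr)=\int_{A^{+}}\det J_0^{(p)}(t)\,dV(p)\le\mathrm{vol}(M)<\infty, \]
Fatou's lemma forces $\mathrm{vol}(A^{+})=0$. The identical argument applied to the backward flow $\phi_{-t}$ gives $\mathrm{vol}(A^{-})=0$ for $A^{-}=\{p:\ n-1-2m_{-1}(p)>0\}$.

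Finally I would combine the two a.e.\ bounds. For almost every $p$ we get $m_1(p),m_{-1}(p)\ge(n-1)/2$; as $m_1+m_{-1}\le n-1$, this forces $m_1=m_{-1}=(n-1)/2$ (in particular $n$ is odd), so all eigenvalues of $C_{T_p}$ equal $\pm1$ in equal numbers, whence $\mathrm{tr}\,C_{T_p}=0$ and $\mathrm{div}\,T(p)=0$ almost everywhere. Since $\mathrm{div}\,T=-\mathrm{tr}\,C_T$ is continuous on $\Omega$, it vanishes identically. The hard part will be the passage from finite volume to the pointwise conclusion: concretely, (i) checking that nullity geodesics issuing from $\Omega$ stay in $\Omega$, so that $\phi_t$ is an honest measure-finite flow to which the change-of-variables and Fatou steps apply, and (ii) verifying that the leading coefficient of $\det J_0(t)$ is strictly positive, so that the Jacobian genuinely diverges rather than oscillates or changes sign; both rest on the eigenvalue bound of Corollary~\ref{real-evs}(b).
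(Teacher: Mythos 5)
Your proof is correct, and its skeleton coincides with the paper's: both reduce $\mathrm{div}\,T=-\mathrm{tr}\,C_T$, integrate the Riccati equation explicitly along complete nullity geodesics (which stay in the set of minimal nullity by the completeness result of~\cite{maltzCompl}), read off the asymptotics as $t\to\pm\infty$ in terms of the multiplicities of $\pm1$ as eigenvalues of $C_0$ (with Corollary~\ref{real-evs}(b) controlling the real spectrum), and then use finite volume to force both multiplicities to equal $(n-1)/2$, whence $n$ is odd and the trace vanishes. Where you genuinely differ is in the mechanism of the finite-volume obstruction. The paper works with $\mathrm{tr}\,C(t)=-P(\xi)/Q(\xi)$, $\xi=\tanh t$, computes $\lim_{t\to\pm\infty}\mathrm{tr}\,C(t)=-(m-2k_\pm)$ by L'H\^opital, and then, assuming $k_+<m/2$ at some point, uses semicontinuity of the multiplicity to get a \emph{uniform} positive lower bound for $\mathrm{div}\,T$ on the forward tube $U(t)$ over a compact transversal disk; the contradiction comes from the Divergence Theorem and the first variation of volume applied to the nested sets $U(t_0+s)$, whose volumes are finite and non-increasing yet would have positive derivative. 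You instead identify the Jacobian of the nullity flow with $\det J_0(t)$ via the Liouville formula, and contradict finite volume by change of variables plus Fatou's lemma on the measurable set $A^+$. This buys something concrete: you never need the semicontinuity of $k_+$, the compact transversal, or any uniformity of the convergence $\mathrm{div}\,T(\gamma_x(t))\to m-2k_+(x)$ over the disk --- Fatou handles the pointwise divergence directly and yields the almost-everywhere conclusion, which continuity of $\mathrm{div}\,T$ then upgrades to an identity on the whole open set of minimal nullity. The trade-off is that the paper's tube argument is more elementary and geometric, while yours leans on measure-theoretic machinery; both ultimately rest on the same positivity facts (real eigenvalues in $[-1,1]$, conjugate pairing of complex ones) that make $\det J_0(t)$ positive and exponentially divergent when the multiplicities are unbalanced.
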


\begin{proof} Let $\gamma$ be an integral curve of $T$,
a complete unit speed $(-1)$-nullity geodesic, and put $C(t):=C_{\gamma'(t)}$. 
According to~(\ref{eq:splitting}),
\begin{equation}\label{c(t)-kappa=-1}
 C(t)=(-\sinh t I + \cosh t C_0)(\cosh t I -\sinh t C_0)^{-1}, 
\end{equation}
where $C_0=C(0)$. Now Jacobi's formula yields
\begin{align*} \mathrm{tr}\,C(t)&=-\frac{\frac{d}{dt}\det(\cosh t I- \sinh t C_0)}{\det(\cosh t I- \sinh t C_0)}\\
                              &=-\frac{P(\xi)}{Q(\xi)},
         \end{align*}
where $\xi=\tanh t$ and 
\[ P(\xi) = \sum_{j=0}^m(-1)^j[(m-j)\xi^{j+1}+j\xi^{j-1}]\sigma_j, \]
and
\[ Q(\xi)=\sum_{j=0}^m(-1)^j\xi^j\sigma_j; \]
here $m=n-1$ and $\sigma_j=\sigma_j(C_0)$ denotes the $j$-symmetric function 
of the eigenvalues of $C_0$. Note that $Q$ is nothing but the 
characteristic polynomial of~$C_0$. 

In order to compute the limits of  $\mathrm{tr}\,C(t)$ as $t\mapsto\pm\infty$,
note that if $Q(1)=Q'(1)=\cdots=Q^{(k-1)}(1)=0$ for some 
$k=0,\ldots,m$, then the alternate sums 
\[ \sum_{j=0}^m(-1)^j\sigma_j=\sum_{j=0}^m(-1)^jj\sigma_j=\cdots=
\sum_{j=0}^m(-1)^jj^{k-1}\sigma_j=0. \]
Therefore
\begin{align*}
P^{(k)}(1) & =  \sum_{j=0}^m(-1)^jj(j-1)\cdots(j-k+2)[(m-2k)j+m+k^2-k]\sigma_j\\
& =  \sum_{j=0}^m(-1)^jj(j-1)\cdots(j-k+2)(m-2k)j\sigma_j\\
&=(m-2k)\sum_{j=0}^m(-1)^jj(j-1)\cdots(j-k+2)(j-k+1)\sigma_j\\
&=(m-2k)Q^{(k)}(1),
\end{align*}
and L' H\^opital rule yields
\[ \lim_{t\to+\infty}\mathrm{tr}\,C(t)=\lim_{\xi\to1}-\frac{P(\xi)}{Q(\xi)}
=-(m-2k). \]

In a similar vein, if $-1$ is a root of multiplicity $k$ of the 
polynomial~$Q$, we compute that $\lim_{t\to-\infty}\mathrm{tr}\,C(t)=m-2k$. 

Next, we use the above calculation to prove the following 
claim: the eigenvalues of~$C_0$
are $-1$ and $+1$, each with multiplicity $m/2$. In particular,
$m$ is even and 
the divergence $\mathrm{div}\, T = \mathrm{tr}\,\nabla T = -\mathrm{tr}\,C_T=0$ 
everywhere. 

Suppose the claim is not true at $p\in M$. Let $\gamma:\R\to M$ be a 
nullity geodesic with $\gamma(0)=p$, $C(t):=C_{\gamma'(t)}$. Now
\[ \lim_{t\to+\infty}\mathrm{tr}\,C(t)=-(m-2k_+),\
\lim_{t\to-\infty}\mathrm{tr}\,C(t)=m-2k_-, \]
where $k_{\pm}$ is the multiplicity of~$\pm1$ as an eigenvalue of $C_0$. 
If $k_+<m/2$ then  $\lim_{t\to+\infty}\mathrm{div}\,T = m-2k_+>0$.
Since $k_+$ is an upper semicontinuous function, we have $m-2k_+>0$ 
on a neighborhood of~$p$ in~$M$. Now we can find a (compact) $m$-disk
transversal to $\mathcal N_{-1}$, containing $p$ in its interior, and 
$t_0$, $L>0$ such that $\mathrm{div}\,T|_{\gamma_x(t)}>L$ for all $t\geq t_0$
and $x\in D$; here $\gamma_x$ denotes the nullity geodesic with 
$\gamma_x(0)=x$, $\gamma_x'(0)=T_x$. Put 
\[ U(t) :=\{\gamma_x(s)\;|\;x\in D, s\geq t\},\ v_s:=\mathrm{vol}(U(t_0+s)). \]
Note that $v_s>0$ since $U(t)$ has non-empty interior, and 
$v_s<\infty$ by our assumption. For $0\leq s_1<s_2$ we have 
$U(t_0+s_2)\subset U(t_0+s_1)$ and thus $v_{s_2}\leq v_{s_1}$. 
On the other hand, the Divergence Theorem and the First
Variation of Volume imply
\[ \frac{d}{ds}v_s = \int_{U(t_0+s)}\mathrm{div}\,T > 0, \]
a contradiction. This proves that $k_+\geq m/2$. 

If $k_-<m/2$, we replace $T$ by $-T$, so that $C_0$ is replaced by $-C_0$
and $k_+$ and $k_-$ are interchanged. Now $k_+<m/2$ and the argument above
leads to a contradiction. Hence $k_-\geq m/2$. Since $k_++k_-\leq m$,
we finally deduce that $k_+=k_-=m/2$. 

\end{proof}

\section{Manifolds with $\kappa$-conullity~$2$}

In this section, we obtain results in 
case $M$ has maximal $\kappa$-conullity~$2$.

\subsection{The case $\kappa=1$}

We now prove Theorem~\ref{kappa1}.
Note that, owing to Corollary~\ref{real-evs}(a),  
$2\geq\rho(2)\geq (n-2)+1$, so $n=3$ and $\nu_1=1$.  
                             
For any $T\in\N_1$, $C_T$ is a $2\times2$ real matrix without 
real eigenvalues, again by Corollary~\ref{real-evs}(a), thus 
with a pair of complex conjugate eigenvalues.
Moreover Lemma~\ref{lem:TraceSplitting} says $\mathrm{tr}\,C_T=0$ and
$\det C_T=1$ if $||T||=1$, so that the eigenvalues of $C_T$ must be~$\pm i$. 
Since $C_T^2=-I$, equation~(\ref{eq:stde}) implies that 
$C_{\gamma'}$ is constant along a unit speed nullity geodesic
$\gamma$ with respect to any parallel orthonormal
frame of $\D:=\N_1^\perp$, therefore we can write
$C_T=\left(\begin{smallmatrix}0&-1\\1&0 
  \end{smallmatrix}\right)$ along~$T=\gamma'$,
with respect to a parallel orthonormal frame of $\D$. 

Since $C_T$ is skew-symmetric, the distribution $\D$ is 
non-integrable. A nowhere integrable rank~$2$ 
distribution in a $3$-manifold must be a contact distribution. 
For $X\in\D$, 
we have 
\[ \langle L_TX,T \rangle =\langle -\nabla_XT,T\rangle
=-\frac12 X\cdot||T||^2 = 0. \]
Now the flow of $T$ preserves $\D$, 
so $T$ is the Reeb (or characteristic) 
vector field of $\D$~\cite[\S~3.1]{bookBlair}. Further, $\nabla T=-C_T$
is skew-symmetric, so $T$ is a Killing field. This says $M$ is a 
$K$-contact distribution~\cite[\S~6.2]{bookBlair}, 
which in dimension~$3$ is equivalent to 
Sasakian~\cite[Cor.~6.5]{bookBlair}.

A $3$-dimensional Sasakian manifold with constant scalar curvature
is locally $\varphi$-symmetric~\cite[Thm.~4.1]{watanabe}. Since $M$ is assumed
complete and simply-connected, it is a globally $\varphi$-symmetric 
space~\cite[Thm.~6.2]{takahashi}. By the classification of 
Sasakian globally $\varphi$-symmetric spaces
in dimension~$3$~\cite[Thm.~11]{blair-vanhecke}, 
we finally deduce that $M$ is a Sasakian space form, that is, 
 those listed in the statement
of Theorem~\ref{kappa1}.

\subsection{The case $\kappa=0$}\label{sec:kappa0}
Now we deal with Theorem~\ref{kappa0}. 

Since $s\neq0$ everywhere, the conullity equals~$2$
everywhere. 
For each $p\in M$,
consider the linear map $C_p:\N_0|_p \to M(2,\R)$.
Lemma~\ref{tr-spl-kappa0}
says that $\mathrm{tr}\,C_S=0$ and
$\det C_S=0$ for $S\in\N_0$, so the image of~$C_p$ is at most one-dimensional.
Let $U$ be the set of points $p\in M$ such that $C_p\neq0$.
On $U$ we choose a unit vector field $T\in\N_0$ spanning the
orthogonal complement to $\ker C$ in $\N_0$.
It follows from equation~(\ref{eq:spttns}) that $\nabla_TS\in\ker C$
for all $S\in\ker C$. Therefore $\nabla_TT=0$. 

Since $\det C_T=0$ and the real eigenvalues of $C_T$ can only be zero, due
to Corollary~\ref{real-evs}(b), the endomorphism $C_T$ is nilpotent. 
Now for each $p\in U$ we can find
an orthonormal basis $X_p$, $Y_p$ of~$\D_p=\N_0^\perp|_p$ such that
$C_TX|_p=0$ and $C_TY|_p=a(p)X_p$ for some $a(p)\neq0$; on a connected
component of~$U$, we may assume $a(p)>0$ for all~$p$. 
Also, it follows from
equation~(\ref{eq:stde}) that $X$ and $Y$ can be taken parallel
along a nullity geodesic, and then also the function~$a$ is constant 
along $\gamma$. By passing to the Riemannian universal
covering of $U$, if necessary,
we may define the orthonormal frame $X$, $Y$ of $\D$
globally. Now the Levi-Civit\`a connection satisfies:
\[ \nabla_TT=\nabla_TX=\nabla_TY=0,\]
\[ \nabla_XT=(\nabla_XT)^v\perp T,\
  \nabla_YT=-aX+(\nabla_YT)^v, \]
\[ \nabla_XX=\alpha Y,\ \nabla_YY=\beta X,\ \nabla_XY=-\alpha X,\
  \nabla_YX=-\beta Y+aT, \]
for some smooth functions $\alpha$, $\beta$ on $U$.
We compute that
\[ R(X,Y)X= (X(a)-a\beta)T+(\alpha^2+\beta^2-X(\beta)-Y(\alpha))Y+a(\nabla_XT)^v, \]
and
\[ R(Y,X)Y= -a\alpha T+(\alpha^2+\beta^2-X(\beta)-Y(\alpha))X. \]
From $T\in\N_0$ we deduce that
\begin{equation}\label{eqns-kappa0}
  \alpha=0,\ X(a)=a\beta,\ s=2(X(\beta)-\beta^2).
  \end{equation}
In particular any integral curve $\eta$ of $X$ in $U$ is a geodesic.
By completeness of $M$, the curve~$\eta$ can be extended to a complete geodesic.
We claim that $\eta$ is entirely contained in~$U$. Indeed
the second equation~(\ref{eqns-kappa0}) yields that
\[ \frac d{dt}\log a(\eta(t))=\beta(\eta(t)), \]
and hence
\[ a(\eta(t))=a(\eta(0))e^{\int_0^t\beta(\eta(\xi))\,d\xi}. \]
The third equation in~(\ref{eqns-kappa0}) says that 
$X(\beta)=\frac12s+\beta^2>0$, so
\[ a(\eta(t))\geq a(\eta(0))e^{t\beta(\eta(0))}>0 \]
for $t>0$. In particular $a$ is bounded away from zero along $\eta$ for
positive time. Repeating the argument for negative time yields
that $\eta$ is contained in $U$.
By assumption $s\geq2\delta^2$ for some $\delta>0$, so
$X(\beta)\geq \delta^2+\beta^2$. 
After integration, we can write
\[ \arctan(\delta^{-1} \beta(\eta(t)))\geq\delta t +
  \arctan(\delta^{-1} \beta(\eta(0))) \]
for all $t\in \R$. This is a contradiction, since the right-hand side
is unbounded. Hence $U=\varnothing$, which is to say $C\equiv0$,
and this implies that $M$ splits. 

\subsection{The case $\kappa=-1$}\label{sec:kappa-negative}

In this subsection, we prove Theorem~\ref{kappa(-1)}. 

We will first consider parts~(a) and~(c) of the statement.  
Note that the scalar curvature is constant under the assumptions
there. For each $p\in M$,
consider the linear map $C_p:\N_{-1}|_p \to M(2,\R)$.
Since the scalar curvature is constant, Lemma~\ref{lem:TraceSplitting}
says that $\mathrm{tr}\,C_T=0$ and
$\det C_T=-1$ for unit $T\in\N_{-1}$,
so $C_p$ is injective and its image lies in the
$3$-dimensional subspace of traceless matrices. Moreover, $C_p$ cannot
be onto the subspace of traceless matrices, as this subspace contains
singular matrices. Therefore $\dim\N_{-1}|_p<3$, and hence $n<5$. 

We next rule out the case $n=4$.
By dimensional arguments the image of~$C_p$ meets the subspace of symmetric
endomorphisms of $\D_p$, for each $p\in M$.
Taking the trace of equation~(\ref{eq:spttns}) throughout, we obtain
\[ \mathrm{tr}(C_SC_T)=2\left\langle T,S\right\rangle  \]
for all $S$, $T\in\N_{-1}$.
Now we can find local orthonormal frames
$T_1$, $T_2$ of $\N_{-1}$ and $X$, $Y$ of $\D$
such that $C_{T_1}$ and $C_{T_2}$ are respectively represented by the
matrices
\[ \begin{pmatrix}1&0\\0&-1\end{pmatrix}\ \mbox{and}\ 
  \begin{pmatrix}0&b\\1/b&0\end{pmatrix}, \]
where $b$ is a nowhere zero locally defined smooth function on $M$.
We refer again to equation~(\ref{eq:spttns}) to write
\[ \nabla_{T_2}C_{T_1}=C_{T_1}C_{T_2}+ C_{\nabla_{T_2}T_1}, \]
and identify the endomorphisms with their matrices to obtain 
\[ 0 =  \begin{pmatrix}0&b\\-1/b&0\end{pmatrix}
  +\langle\nabla_{T_2}T_1,T_2\rangle \begin{pmatrix}0&b\\1/b&0\end{pmatrix}, \]
which clearly is impossible. 

Now $n=3$. Let $T\in\N_{-1}$, $||T||=1$. We already know
that~$\mathrm{tr}\,C_T=0$ and
$\det C_T=-1$, so the eigenvalues of $C_T$ are $\pm1$. 
Since $C_T^2=I$, equation~(\ref{eq:stde}) implies that 
$C_{\gamma'}$ is constant along a nullity geodesic
$\gamma$ with respect to any parallel orthonormal
frame of $\D:=\N_1^\perp$. Then we can write
$C_T=\left(\begin{smallmatrix}-1&0\\0&1 
  \end{smallmatrix}\right)$ along~$T=\gamma'$
with respect to a parallel frame of unit vector fields
$\tilde X$, $\tilde Y$ of $\D$ along $\gamma$. Note that this frame
is orthogonal at $p$ if and only if $C_{T_p}$ is a
symmetric endomorphism if and only if $\D$ is integrable
at~$p$.

In any case we have a locally defined 
frame $T$, $\tilde X$, $\tilde Y$, where we put
$f:=-\langle \tilde X,\tilde Y\rangle$,
and we orthonormalize it to get
\begin{align*}
  X&=\frac1{\sqrt{1-f^2}}(\tilde X+f\tilde Y),\\
  Y&=\tilde Y.
\end{align*}
With respect to $X$, $Y$ we have
\[ C_T=\begin{pmatrix}-1&0\\2F&1 
  \end{pmatrix}, \]
where we have set $F:=f/\sqrt{1-f^2}$. Note that~$f$ is constant
along~$\gamma$, so $X$, $Y$ are parallel along $\gamma$ and $T(F)=0$. 
Hence we
can write the Levi-Civit\`a connection as follows:
\begin{equation}\label{lc}
  \begin{gathered}
    \nabla_TT=\nabla_TX=\nabla_TY=0,\ \nabla_XT=X-2FY,\ \nabla_YT=-Y,\\
 \nabla_XX=-T+\alpha Y,\ \nabla_YY=T+\beta X,\ \nabla_XY=2FT-\alpha X,\
  \nabla_YX=-\beta Y,
\end{gathered}
\end{equation}
for some locally defined smooth functions $\alpha$, $\beta$. 
The bracket relations follow:
\begin{equation}\label{brackets-kappa-negative}
  [X,Y]=2FT-\alpha X+\beta Y,\ [T,X]=-X+2FY,\ [T,Y]=Y.
  \end{equation}
Next, the curvature relations
\[ \langle R(X,Y)X,Y\rangle=-K_{\mathcal D}, \]
where $K_{\mathcal D}$ is the sectional curvature of the
plane spanned by $X$, $Y$, and 
\[  \langle R(X,Y)X,T\rangle=\langle R(T,Y)X,Y\rangle=
  \langle R(X,Y)Y,T\rangle=0, \]
yield the equations
\begin{equation}\label{eqns-kappa-negative}
\begin{aligned}
  \alpha &= -\beta F,\\ 
  T(\beta)&=\beta,\\
  Y(F)&=-\beta(1+F^2),\\
  X(\beta)-FY(\beta) &= K_{\mathcal D}-1.
\end{aligned}                      
\end{equation}

With these equations at hand, we can finish the 
proofs of  parts~(a) and~(c).
In view of~(\ref{brackets-kappa-negative}),
$\D$ is integrable on an open set $U$
if and only if $F$ vanishes identically on $U$.
Assume this is the case.
Equations~(\ref{eqns-kappa-negative}) then imply $\alpha=\beta=0$
and $K_{\mathcal D}=1$. Now~(\ref{brackets-kappa-negative})
reduces to
\[   [X,Y]=0,\ [T,X]=-X,\ [T,Y]=Y. \]
In other words, we have a local orthonornal frame of vector fields
whose Lie brackets have constant coefficients in this frame.
Owing to Lie's third fundamental theorem,
(see also~\cite[(1.4)]{griffiths} or~\cite[Lem.~2.5]{yamato}), 
$U$ is locally isometric to a Lie group with left-invariant metric;
in this case, $E(1,1)=SO_0(1,1)\ltimes\R^2$. This proves part~(a).

Assume now $M$ is simply-connected and homogeneous as in~(c). Then
$F$ is constant, and the equations~(\ref{eqns-kappa-negative})
say that, again, $\alpha=\beta=0$ and $K_{\mathcal D}=1$.
Lie's third fundamental theorem yields that $M$ is isometric
to $E(1,1)$ in case $F=0$, and to $\widetilde{SL(2,\R)}$ in case $F\neq0$.
This proves~(c).

Finally, we deal with~(b). We assume $M$ is complete and
has finite volume.
Fix a complete unit speed nullity geodesic $\gamma$ in the open
set of minimal $(-1)$-nullity~$\Omega$. 
If the scalar curvature of $M$ is bounded away from $-n(n-1)$,
then we apply Lemma~\ref{tr-spl-kappa0} to deduce that 
the splitting tensor $C_{\gamma'(t)}=C(t)$ satisfies 
$\mathrm{tr}\,C(t)=0$ and $\det C(t)=-1$ for all~$t\in\mathbb R$,
and the argument follows as in the beginning of the proof
to show that we must have $n=3$. 
Then we can construct the locally defined orthonormal frame
$T$, $X$, $Y$ as above, and we have equations~(\ref{eqns-kappa-negative}).
Note that $M$ has $(-1)$-nullity $1$, so Lemma~\ref{div} says that 
$\mathrm{div}\,T=0$. 

We follow an argument in~\cite{sw}.  
Let $\gamma$ be again a complete
$(-1)$-nullity geodesic in $\Omega$, parameterized
so that $T=\gamma'$ along $\gamma$. 
The second equation in~(\ref{eqns-kappa-negative}) implies
that
\begin{equation}\label{beta}
  \beta(\gamma(t))=\beta(\gamma(0))e^t.
\end{equation}
Let $D$ be a compact $2$-disk transversal to $\gamma$ at $\gamma(0)$.
By our assumption that the volume is finite 
and the Poincar\'e recurrence theorem,
$\gamma$ meets $D$ infinitely many times as $t\mapsto\pm\infty$.
Since $\beta$ is bounded on $D$, equation~(\ref{beta}) says that
$\beta$ vanishes identically along $\gamma$. Since $\gamma$ is any
nullity geodesic in~$\Omega$, 
now $\beta=0$ on~$\Omega$. The third
equation in~(\ref{eqns-kappa-negative}) gives $Y(F)=0$ and we
also have $T(F)=0$. Finally, apply the second bracket relation
in~(\ref{brackets-kappa-negative}) to $F$ to get $TX(F)=-X(F)$.
The same argument using the Poincar\'e recurrence theorem implies
that $X(F)=0$. Now $\beta$ and $F$ are constant on~$\Omega$.
In particular the scalar curvature is constant on the closure 
$\bar\Omega$, namely, equal to $2(2\kappa+K_{\mathcal D})=-2$. 
The complement $M\setminus\Omega$ is a closed set consisting of isotropic 
points of $M$, namely, where all sectional curvatures are $-1$ and hence
the scalar curvature is $-6$. By connectedness of $M$, $\Omega=M$. 
Now $\beta$ and $F$ are constant on $M$ and this 
implies that the universal covering of $M$ is homogeneous, via
Lie's third fundamental theorem. This completes the proof
of Theorem~\ref{kappa(-1)}.

\section{Almost Abelian Lie groups}

An \emph{almost Abelian} Lie group~$G$ is a non-Abelian
(real connected) Lie group whose Lie algebra~$\mathfrak g$
has a codimension one Abelian ideal~$V$ (it is equivalent
to require the existence of a codimension one subalgebra~\cite{BC}). 
Hence we can write its Lie algebra as a semidirect product 
$\mathfrak g=\R\ltimes_A V$, where $V$ is an Abelian ideal and 
the action of $\R$ on $V$ is determined by the adjoint action 
of a fixed generator $\xi\in\R$, which we represent
by an operator $A\in\mathfrak{gl}(V)$, so that $[\xi,X]=AX$
for all $X\in V$. Note that $G=\R\ltimes_{e^A}V$, $G$ is unimodular if and only
if $\mathrm{tr}\,A=0$, and $A\neq0$ as $G$ is non-Abelian.  

First we compute the curvature of an almost Abelian 
Lie group $G$, equipped with a left-invariant Riemannian metric
that makes $\xi$ unit, and $\xi$ and $V$ orthogonal. Koszul's formula for the 
Levi-Civit\`a connection immediately 
yields:
\[ \nabla_\xi\xi=0,\ \nabla_\xi X=A^{sk}X,\ \nabla_X\xi=-A^{sy}X,\
\nabla_XY=\langle A^{sy}X,Y\rangle\xi, \]
for all~$X$, $Y\in V$, where $A=A^{sy}+A^{sk}$ is the decomposition 
of $A$ into its symmetric and skew-symmetric components. 
This easily gives expressions for the curvature tensor and the 
sectional curvatures as follows:
\begin{equation}\label{R-almost-abelian}
  \begin{gathered}
      R(X,Y)Z=-\langle A^{sy}Y,Z\rangle A^{sy}X+\langle A^{sy}X,Z\rangle A^{sy}Y,\\
R(X,Y)\xi=0,\\
R(\xi,X)Y=\langle([A^{sk},A^{sy}]-(A^{sy})^2)X,Y\rangle\xi,\\
R(\xi,X)\xi=([A^{sy},A^{sk}]+(A^{sy})^2)X,
\end{gathered}
\end{equation}
for $X$, $Y$, $Z\in V$. 
In particular, the sectional curvatures are given by:
\begin{equation*}
 K(\xi,X) = -||A^{sy}X||^2-\langle[A^{sy},A^{sk}]X,X\rangle, 
\end{equation*}
and
\[ K(X,Y) = - \det\begin{pmatrix}\langle A^{sy}X,X\rangle &\langle A^{sy}X,Y\rangle \\ \langle A^{sy}Y,X\rangle &\langle A^{sy}Y,Y\rangle \end{pmatrix}, \]
for all $X$, $Y\in V$. 

Let $X_1,\ldots,X_m$ be an orthonormal basis of $V$ consisting
of eigenvectors of $A^{sy}$, with corresponding eigenvalues
$\lambda_1,\ldots,\lambda_m$. 
Now we can express the Ricci curvature
as: 
\begin{gather*}
Ric(\xi,\xi)=-\sum_{j=1}^m\lambda_j^2, 
\quad Ric(\xi,X_i)=0, \\
Ric(X_i,X_i) = -\lambda_i\sum_{j=1}^m\lambda_j,\quad
Ric(X_i,X_j) = (\lambda_i-\lambda_j)\langle A^{sk}X_i,X_j\rangle\
\text{($i\neq j$)}. 
\end{gather*}
Finally, the scalar curvature is
\[ scal= -\sum_{i=1}^m\lambda_i^2 -(\sum_{i=1}^m\lambda_i)^2. \]

\begin{lem}\label{lem:nul-aa}
  If $G=\R\ltimes_{e^A}V$ is not flat, then its~$0$-nullity
  distribution is the left-invariant distribution 
defined by the subspace
\begin{equation}\label{nul-aa}
  \ker A^{sy} \cap (A^{sk})^{-1}(\ker A^{sy}).
  \end{equation}
of~$V$. 
\end{lem}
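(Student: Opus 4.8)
The plan is to compute the nullity at the identity and then invoke left-invariance. Since $G$ acts on itself by isometries it preserves $R$ and hence $\N_0$, so $\N_0$ is the left-invariant distribution determined by $\N_0|_e\subseteq\Lg=\R\xi\oplus V$, and it suffices to identify this subspace. I would write a candidate nullity vector as $z=a\xi+Z$ with $a\in\R$, $Z\in V$. By definition (with $\kappa=0$) one has $z\in\N_0$ iff $R(x,y)z=0$ for all $x$, $y$, and by bilinearity and skew-symmetry it is enough to test the pairs $(X,Y)$ and $(\xi,X)$ with $X$, $Y\in V$. Reading off the formulas in~(\ref{R-almost-abelian}) and using $R(X,Y)\xi=0$, the equation $R(X,Y)z=0$ reduces to $R(X,Y)Z=0$, whereas $R(\xi,X)z=0$ splits into its $V$-component and its $\xi$-component.

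The second step is an algebraic simplification. The commutator of a symmetric and a skew-symmetric operator is symmetric, so $B:=[A^{sk},A^{sy}]-(A^{sy})^2$ is symmetric. Using $R(\xi,X)\xi=-BX$ and $R(\xi,X)Y=\langle BX,Y\rangle\xi$, the vanishing of $R(\xi,X)z$ for all $X$ becomes the two conditions $aB=0$ (from the $V$-part) and $BZ=0$ (from the $\xi$-part, using $B=B^{*}$). Altogether $z\in\N_0$ is equivalent to the three conditions (A) $R(X,Y)Z=0$ for all $X$, $Y\in V$, (B) $BZ=0$, and (C) $aB=0$. The key point is that $B=0$ forces flatness: taking traces gives $\mathrm{tr}\,B=-\mathrm{tr}((A^{sy})^2)=-\sum_i\lambda_i^2$, so $B=0$ implies $A^{sy}=0$, and then every expression in~(\ref{R-almost-abelian}) vanishes. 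Since $G$ is assumed non-flat we conclude $B\neq0$, so (C) forces $a=0$, i.e. $\N_0|_e\subseteq V$.

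It then remains to show that, for $Z\in V$, conditions (A) and (B) together are equivalent to $A^{sy}Z=0$ and $A^{sy}A^{sk}Z=0$, that is, $Z\in\ker A^{sy}\cap(A^{sk})^{-1}(\ker A^{sy})$. The implication $\Leftarrow$ is immediate: if $A^{sy}Z=0$ then both terms of $R(X,Y)Z$ vanish and $(A^{sy})^2Z=0$, so $BZ=[A^{sk},A^{sy}]Z=-A^{sy}A^{sk}Z$, which is $0$ exactly when $A^{sy}A^{sk}Z=0$. For $\Rightarrow$ the main obstacle is the degenerate rank-one case, since (A) alone does not force $A^{sy}Z=0$ there. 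I would rewrite (A) as $\langle X,u\rangle A^{sy}Y=\langle Y,u\rangle A^{sy}X$ with $u:=A^{sy}Z$; assuming $u\neq0$ and setting $X=u$ shows that $A^{sy}$ has rank one, say $A^{sy}X=\lambda\langle X,e\rangle e$ with $\lambda\neq0$, $|e|=1$ and $\langle Z,e\rangle\neq0$. A short computation then gives $BZ=\lambda\langle Z,e\rangle A^{sk}e+(\lambda\langle Z,A^{sk}e\rangle-\lambda^{2}\langle Z,e\rangle)e$, and since $A^{sk}e\perp e$, condition (B) forces first $A^{sk}e=0$ and then $\lambda^{2}\langle Z,e\rangle=0$, a contradiction. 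Hence $A^{sy}Z=0$, and (B) reduces to $A^{sy}A^{sk}Z=0$ as before. This identifies $\N_0|_e$ with the asserted subspace, and left-invariance completes the proof.
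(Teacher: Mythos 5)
Your proposal is correct; every step checks out against the curvature formulas~(\ref{R-almost-abelian}), including the sign identities $R(\xi,X)\xi=-BX$, $R(\xi,X)Y=\langle BX,Y\rangle\xi$ with $B=[A^{sk},A^{sy}]-(A^{sy})^2$ symmetric, and the decomposition of $BZ$ along $e$ and $A^{sk}e$ in the rank-one case. The route differs from the paper's mainly in organization: the paper fixes an orthonormal eigenbasis $X_1,\ldots,X_m$ of $A^{sy}$, tests the nullity condition against eigenvectors $X_j$ with $\lambda_j\neq0$ (the $\xi$-component of $R(z,X_j)X_j=0$ kills $a_0$, and the $V$-components kill the coefficients $a_i$ with $\lambda_i\neq0$), and then splits into cases according to whether $A^{sy}$ has two nonzero eigenvalues or only one; you instead stay coordinate-free, package the nullity condition into the three conditions (A), (B), (C), and obtain $a=0$ from the trace identity $\mathrm{tr}\,B=-\mathrm{tr}\bigl((A^{sy})^2\bigr)$, which converts nonflatness into $B\neq0$. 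Both proofs ultimately confront the same genuine difficulty---the case where $A^{sy}$ has rank one, in which $R(X,Y)Z=0$ alone does not force $A^{sy}Z=0$---and resolve it by essentially the same computation, projecting the equation $BZ=0$ (equivalently the paper's $R(\xi,X)\xi=0$) onto the eigendirection and its orthogonal complement to reach a contradiction. What your version buys is the avoidance of the eigenbasis bookkeeping and the clean byproduct that $G$ is flat if and only if $B=0$ if and only if $A^{sy}=0$; what the paper's version buys is that the needed vanishing of coefficients is read off directly from explicit equations in the eigenbasis, with no appeal to the rank-one normal form of a symmetric operator.
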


\Pf We use the notation above and formulae~(\ref{R-almost-abelian}).
Suppose $a_0\xi+\sum_{i=1}^ma_iX_i\in\N_0$ for
some~$a_0$, $a_1,\ldots,a_m\in\R$. The nonflatness assumption
implies that $A^{sy}$ is nonzero, so there is an
index~$j$ such that $\lambda_j\neq0$. Now
\begin{align*}
  0&=R(a_0\xi+\sum_{i=1}^ma_iX_i,X_j)X_j\\
   &=a_0R(\xi,X_j)X_j+\sum_{i\neq j}a_iR(X_i,X_j)X_j\\
   &=-a_0\lambda_j^2\xi-\lambda_j\sum_{i\neq j}a_i\lambda_iX_i.
\end{align*}
We deduce that $a_0=0$ (so that $\N_0\subset V$), and $a_i=0$ for
all~$i\neq j$ with $\lambda_i\neq0$. If there is $k\neq j$ with 
$\lambda_k\neq0$, then we repeat the above argument with~$k$ in place of~$j$
to arrive at~$a_j=0$. If, on the contrary, all $i\neq j$ have $\lambda_i=0$, 
we use the following argument for $X=\sum_i a_iX_i\in\N_0$:
\begin{align*}
0&=R(\xi,X)\xi\\
&=A^{sy}A^{sk}X-A^{sk}A^{sy}X+(A^{sy})^2X\\
&=\langle A^{sk}X,X_j\rangle \lambda_jX_j-a_j\lambda_jA^{sk}X_j + a_j\lambda_j^2X_j.
\end{align*}
Since $A^{sk}X_j\perp X_j$, the second term gives that $a_j=0$ 
or $A^{sk}X_j=0$. In the latter case, the first term vanishes, 
and hence the last term gives again $a_j=0$. In any case $a_j=0$, so
$\N_0\subset\ker A^{sy}$. Finally, if $X\in\N_0$ and $Y\in V$ then 
\[ 0=\langle R(\xi,X)\xi,Y\rangle=\langle A^{sk}X,A^{sy}Y\rangle. \]
Therefore $A^{sk}X\in(\mathrm{im}\,A^{sy})^\perp=\ker A^{sy}$, 
proving that $\N_0$ is contained in~(\ref{nul-aa}). 
The converse inclusion is clear from~(\ref{R-almost-abelian}),
and this finishes the proof. \EPf 

\medskip

We are now prepared to prove Theorem~\ref{nul1-kappa0}.
Consider an almost Abelian group
$G=\R\ltimes_{e^A}V$, where
\[ A= \begin{pmatrix}0&-b&0&-c\\
    b&0&0&0\\
    0&0&-a&0\\
    c&0&0&a\end{pmatrix} \]
with respect to a basis $X_1,\ldots,X_4$ of~$V$. Note that 
$G$ is unimodular. 

  Take the left-invariant metric on $G$ obtained by declaring the
  basis $\xi$, $X_1,\ldots,X_4$ orthonormal. Choose the
  coefficients $a$, $b$, $c$ of the matrix~$A$
  to be all nonzero.  Thanks to Lemma~\ref{lem:nul-aa}, we immediately see
  that the nullity distribution is spanned by $X_2$. Now $G$ has
  $0$-nullity~$1$. 

 Suppose $G$ splits as a Riemannian product. Then the conullity
 splits accordingly. It follows that either one of the factors
 is flat, or both have conullity $2$. In the former case, one of the 
 factors coincides with the $0$-nullity,
 but this contradicts the fact that the
splitting tensor $C_{X_2}\xi=-(\nabla_\xi X_2)^h=-bX_1$ is nonzero. 
Therefore we must be in the latter case.
The factors
 are of $0$-conullity~$2$, therefore they are
 semi-symmetric. We note that 
the left-translations of $G$ are 
isometries, and thus must preserve the factors.
Now the factors are homogeneous; by~\cite[Prop.~5.1]{szabo},
 they must be symmetric,
 and hence $G$ is symmetric, a contradiction. Hence $G$ is irreducible. 
  
Finally, $G$ has quotients of finite volume if, in addition, 
$e^A$ can be represented by a matrix with integral coefficients
in some basis of $V$, 
in view of the following result (see~\cite[Cor.~6.4.3]{filip};
  here it is worth mentioning that every finite volume quotient of
  a solvable Lie group by a discrete subgroup is automatically
  compact, a result due to Mostow~\cite{mostow}). 

\begin{lem}[Filipkiewicz's criterion]\label{lem:filip}
  Suppose $G=\R\ltimes_{e^A}V$ is unimodular and non-nilpotent. Then there
  is a discrete subgroup $\Gamma$ of $G$ with $\Gamma\backslash G$ compact
  if and only if there exists $\lambda\in\R$, $\lambda\neq0$, such that
  $e^{\lambda A}$ has a characteristic polynomial with integral coefficients.
  \end{lem}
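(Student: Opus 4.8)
The plan is to reduce the existence of a cocompact lattice to an integrality condition on a single automorphism $e^{t_0A}$, and then to match that with the stated condition on the generator $A$. Since $A\neq0$ is not nilpotent, $G$ is metabelian with nilradical exactly $V$ and $G/V\cong\R$. First I would apply the structure theory of lattices in simply connected solvable Lie groups: if $\Gamma\subset G$ is a lattice, then $L:=\Gamma\cap V$ is a lattice of $V\cong\R^m$ (here $m=\dim V$) and the image of $\Gamma$ in $G/V\cong\R$ is a nontrivial discrete subgroup $t_0\Z$, $t_0\neq0$. Picking $g\in\Gamma$ projecting to $t_0$ and using $g(0,v)g^{-1}=(0,e^{t_0A}v)$ together with the normality of $V$ shows $e^{t_0A}L=L$. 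Conversely, given $t_0\neq0$ and a lattice $L$ with $e^{t_0A}L=L$, the set $\Gamma=\{(kt_0,\ell):k\in\Z,\ \ell\in L\}$ is a subgroup (closure follows from $e^{kt_0A}L=L$), which is discrete and cocompact, a compact fundamental domain being $[0,t_0]$ times a fundamental domain for $L$ in $V$. Because $G$ is solvable and unimodular, Mostow's theorem quoted in the text guarantees that finite volume quotients are automatically compact, so this is the same as asking for a finite volume quotient. The problem is thus equivalent to: \emph{there exists $t_0\neq0$ for which $e^{t_0A}$ preserves a lattice of $V$.}

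Next I would record the elementary fact that an invertible $M\in\mathrm{GL}(V)$ preserves some lattice if and only if its characteristic polynomial lies in $\Z[x]$. The forward implication is immediate by expressing $M$ in a $\Z$-basis of the preserved lattice. For the converse, if the monic characteristic polynomial is integral, then each invariant factor of $M$ is a monic rational divisor of it, hence integral by Gauss's lemma; therefore the rational canonical form of $M$ is an integer matrix, and conjugating the standard lattice by the change-of-basis matrix yields a lattice preserved by $M$. Applied to $M=e^{t_0A}$, and noting that unimodularity gives $\det e^{t_0A}=e^{t_0\,\mathrm{tr}\,A}=1$, the reduced problem becomes: \emph{there exists $t_0\neq0$ such that the characteristic polynomial of $e^{t_0A}$ has integer coefficients.} This already settles the implication needed in the application, since if $e^A$ is integral in some basis we may take $t_0=1$ and read off the explicit lattice above.

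The remaining task is to translate this one-parameter automorphism condition into the stated condition on the generator $A$, and this is the step I expect to be the main obstacle. Since the characteristic polynomials of $e^{t_0A}$ and of $\lambda A$ depend only on the eigenvalues $\mu_1,\dots,\mu_m$ of $A$ (with multiplicity), I would first pass to the semisimple part via the Jordan decomposition, reducing to $A$ semisimple; the nilpotent part only affects the Jordan structure, not the characteristic polynomial. The content is then purely arithmetic and rather delicate: integrality of the characteristic polynomial of $e^{t_0A}$ says that the eigenvalues $e^{t_0\mu_j}$ form a full Galois-stable family of algebraic units with integral symmetric functions, whereas the condition on $A$ concerns the algebraic integrality of the $\mu_j$ themselves after scaling. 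Passing between the multiplicative (exponentiated) and additive levels amounts to analysing the $\mathbb{Q}$-linear relations among the numbers $t_0\mu_j=\log e^{t_0\mu_j}$ forced by the arithmetic of units, and producing a compatible scaling $\lambda$ requires invoking Dirichlet's unit theorem together with careful bookkeeping of the Galois action on the eigenvalues; this is exactly the point where one must be most careful, since the naive infinitesimal reading of the integrality condition does not by itself control the admissible automorphisms. This eigenvalue analysis is the heart of Filipkiewicz's corollary, the structure-theoretic and linear-algebraic steps above being routine by comparison.
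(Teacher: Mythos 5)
The first thing to note is that the paper does not prove this lemma at all: it is quoted from Filipkiewicz's thesis \cite{filip} (Cor.~6.4.3 there), with Mostow's theorem invoked only to identify finite-volume quotients with compact ones. So your argument has to stand on its own, and it does not. Your first paragraph is correct and standard: by Mostow--Raghunathan structure theory, $\Gamma\cap V$ is a lattice in the nilradical (which is exactly $V$ because $A$ is not nilpotent), the image of $\Gamma$ in $G/V\cong\R$ is $t_0\Z$ with $t_0\neq 0$, and conjugation gives $e^{t_0A}(\Gamma\cap V)=\Gamma\cap V$; conversely such data assembles into a cocompact lattice. But your second paragraph rests on a false ``elementary fact'': integrality of the characteristic polynomial does \emph{not} imply that $M$ preserves a lattice. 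What is needed is conjugacy of $M$ into $GL_m(\Z)$, and your Gauss's-lemma argument silently assumes that the invariant factors of the real matrix $M$ are rational, which can fail. For instance, $M=\left(\begin{smallmatrix}u&1\\0&u\end{smallmatrix}\right)\oplus u^{-1}I_2$ with $u+u^{-1}=3$ has characteristic polynomial $(x^2-3x+1)^2\in\Z[x]$ and determinant $1$, but its minimal polynomial $(x-u)^2(x-u^{-1})$ is irrational, so $M$ is conjugate to no rational matrix and preserves no lattice. The same phenomenon invalidates your proposed reduction to the semisimple part of $A$: lattice existence is sensitive to the nilpotent part, since for $A$ consisting of a Jordan block with eigenvalue $1$ plus $\mathrm{diag}(-1,-1)$ the group $\R\ltimes_A\R^4$ admits no lattice, whereas for its semisimple part $\mathrm{diag}(1,1,-1,-1)$ it does.

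The decisive gap, however, is the step you flag yourself and leave open: passing from ``there is $t_0\neq0$ such that $e^{t_0A}$ preserves a lattice'' to ``there is $\lambda\neq0$ such that $\lambda A$ has integral characteristic polynomial.'' This is not merely the hard part --- as an unconditional equivalence it is \emph{false}, so no amount of Galois theory or Dirichlet's unit theorem will complete your outline. Take $A=\mathrm{diag}(1,1,-2)$: it is traceless (so $G$ is unimodular) and not nilpotent, and $\mathrm{char}(A)=(x-1)^2(x+2)\in\Z[x]$, so the right-hand side of the lemma holds with $\lambda=1$. Yet $e^{t_0A}=\mathrm{diag}(e^{t_0},e^{t_0},e^{-2t_0})$ preserves no lattice for any $t_0\neq0$: if it did, it would be conjugate to a matrix in $GL_3(\Z)$, hence (being diagonalizable) would have rational minimal polynomial $(x-e^{t_0})(x-e^{-2t_0})$, forcing $e^{t_0}$ and $e^{-t_0}$ to be positive rational algebraic integers with product $1$, i.e.\ both equal to $1$. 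So $\R\ltimes_A\R^3$ has no compact quotient, and by your own (correct) first reduction the multiplicative condition on $e^{t_0A}$ and the additive condition on $\lambda A$ are genuinely different. This also shows the lemma as transcribed must be read with care (presumably Filipkiewicz's original formulation differs); consistently with this, the paper never verifies integrality of $\mathrm{char}(\lambda A)$ in its applications, but instead exhibits $e^{t_0A}$ as conjugate to an explicit integer matrix and uses only the easy direction of the reduction in your first paragraph --- which is the form of the criterion a correct proof should aim at.
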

  
In order to find $A$ satisfying the condition of Lemma~\ref{lem:filip},
consider the standard $4\times4$ matrix with two real eigenvalues 
and two complex conjugate ones:
\[ B=\begin{pmatrix}-\gamma&0&0&0\\0&\gamma-2\alpha&0&0\\
0&0&\alpha&-\beta\\ 0 & 0 &\beta&\alpha\end{pmatrix}. \]
As claimed in~\cite{harshavardhan} (see also~\cite[Prop.~3.7.2.5]{bock}), 
there exists 
$\alpha$, $\beta$, $\gamma\in\R$ such that 
\[ e^B=\begin{pmatrix}e^{-\gamma}&0&0&0\\0&e^{\gamma-2\alpha}&0&0\\
0&0&e^\alpha\cos\beta&-e^\alpha\sin\beta\\ 0 & 0 &e^\alpha\sin\beta&e^\alpha\cos\beta\end{pmatrix} \]
is conjugate to
\[ C=\begin{pmatrix}1&0&0&1\\1&2&0&2\\
0&1&3&0\\ 0 & 0 &1&0\end{pmatrix}. \]
The approximate values are 
\[ \alpha\approx0.308333405,\ \beta\approx0.511773474,\ \gamma\approx1.861109547. \]
Now it suffices to find $a$, $b$, $c\in\R\setminus\{0\}$ such that 
$A$ and $B$ are conjugate. The eigenvalues of $B$ are pairwise 
distinct, so it is enough to know that the characteristic polynomials 
$p_A$ and $p_B$ of $A$ and $B$, resp., are equal. 

We have 
\[ p_A(x)=x^4+(-a^2+b^2+c^2)x^2+ac^2x-a^2b^2, \]
and
\[ p_B(x)=x^4+\sigma x^2 + \mu x + \nu,\]
where
\begin{gather*} \sigma = -2\alpha^2+\beta^2-(\alpha-\gamma)^2,\qquad
\mu=2\alpha((\alpha-\gamma)^2+\beta^2), \\
\nu = (\alpha^2+\beta^2)\gamma(2\alpha-\gamma).
\end{gather*} 
Clearly $\mu>0$ and $\nu<0$. It follows that we can solve 
$a^4+\sigma a^2-\mu a + \nu=0$ for real $a>0$. 
Now $a$, $b=\sqrt{-\nu}/a$, $c=\sqrt{\mu/a}$ yields a matrix $A$
such that $p_A=p_B$. This completes the proof of Theorem~\ref{nul1-kappa0}.

\section{Manifolds of $(-1)$-nullity~$1$}

In this section we prove Theorem~\ref{nul1}. 

By Lemma~\ref{div},  $\mathrm{div}\,T=0$
for any unit vector field $T$ in the nullity $\mathcal N_{-1}$. 
It also follows from the proof of that lemma that $m=n-1$ is even 
and the eigenvalues of~$C_T$
are $-1$ and $+1$, each with multiplicity $m/2$. Fix~$p\in M$. 
Since $C_{T_p}$ has real eigenvalues, it is triangularizable over~$\R$,
hence it is triangularizable in an orthonormal basis. Choose an 
orthonormal basis of $\mathcal N_{-1}^\perp|_p$ with respect to which 
the matrix of $C_{T_p}$ is lower triangular. Let $\gamma$ be a 
nullity geodesic with $\gamma'(0)=T_p$. Parallel translate that 
basis to an orthonormal frame 
along~$\gamma$. Equation~(\ref{c(t)-kappa=-1}) 
shows that $C(t)$ is lower triangular in that frame, with 
eigenvalues $\pm1$, each with multiplicity $m/2$, and off diagonal entries
given by polynomials in $e^t$, $e^{-t}$. The assumption of finite volume
together with the fact that $T$ is divergence-free implies,  
via the Poincar\'e Recurrence
Theorem, that $\gamma$ must come back arbitrarily close to~$p$, 
and infinitely often. We deduce that   
such off diagonal polynomials entries must be constant. Now 
$0=(C_{\gamma'})'=(C_{\gamma'})^2-I$, thanks to~(\ref{eq:stde}).
 In particular $C:=C(t)=C_0$ is conjugate 
to a diagonal matrix with entries $\pm1$, each repeated $m/2$ times. 
Since $C$ is diagonalizable, $\dim\ker(C\pm I)=m/2$, and
so~$C$ has the block form 
\begin{equation}\label{C}
C=\left(\begin{matrix}I_{m/2}&0\\ D&-I_{m/2}\end{matrix}\right). \end{equation} 

Let $T$, $X_1,\ldots,X_m$ be a locally defined orthonormal frame
of $M$, with respect to which $C=C_T$ has the form~(\ref{C}), 
which is parallel along nullity geodesics. Then $\nabla_TT=\nabla_TX_i=0$, 
and $\nabla_{X_i}X_j=\sum_{k=1}^m\Gamma_{ij}^kX_k+\langle C_TX_i,X_j\rangle T$ 
for all $i$, $j=1,\ldots, m$. We claim that all
``Christoffel symbols'' $\Gamma_{ij}^k$ ($i$, $j$, $k=1,\ldots,m$) vanish. 

In order to check the claim, we compute
\[
R(T,X_m)X_i = \sum_k(T(\Gamma_{mi}^k)+\Gamma_{mi}^k)X_k \]
for $i<m$ and 
\[
R(T,X_m)X_m = \sum_k(T(\Gamma_{mm}^k)+\Gamma_{mm}^k)X_k-T. \]
Using that $T\in\N_{-1}$, 
we get $T(\Gamma_{mi}^k)=-\Gamma_{mi}^k$. Poincar\'e recurrence 
now gives that $\Gamma_{mi}^k=0$ for $i$, $k=1,\ldots,m$. Next,
assume by induction that $\Gamma_{ij}^k=0$ for $i=i_0,\ldots,m$,
for some~$i_0$, and for $j$, $k=1,\ldots,m$. Since 
\[ [T,X_{i_0-1}]=C_TX_{i_0-1}=\pm X_{i_0-1}+\mbox{lin.~comb.~of
$X_{i_0},\ldots,X_m$}, \]
we get
\[ R(T,X_{i_0-1})X_j=\nabla_T\nabla_{X_{i_0-1}}X_j\pm\nabla_{X_{i_0-1}}X_j
+\underbrace{\mbox{lin.~comb.~of $\nabla_{X_{i_0}}X_j,\ldots,\nabla_{X_m}X_j$}}_{\text{multiple of T}}, \]
so
\[ 0=\langle R(T,X_{i_0-1})X_j,X_k\rangle = T(\Gamma_{i_0-1,j}^k)\pm
\Gamma_{i_0-1,j}^k, \]
and we use Poincar\'e recurrence again to get $\Gamma_{i_0-1,j}^k=0$. 
This proves that  $\Gamma_{ij}^k=0$  for $i$, $j$, $k=1,\ldots,m$;
in particular, $[X_i,X_j]$ can only have component in $T$, if any. 

Let $f$ be a locally defined smooth function on $M$ representing 
an off diagonal entry of $C$. We already know that $T(f)=0$. 
Now 
\[ TX_m(f)=[T,X_m](f)=C_TX_m(f)=-X_m(f), \]
so $X_m(f)=0$ by Poincar\'e recurrence. Suppose now, by induction, 
that $X_m(f)=\cdots=X_{i_0+1}(f)=0$ for some~$i_0$. Then 
\begin{equation}\begin{split} TX_{i_0}(f)&=[T,X_{i_0}](f)=C_TX_{i_0}(f)\\
&=\pm X_{i_0}(f)+\underbrace{\mbox{(lin.~comb.~of
$X_{i_0+1},\ldots,X_m)(f)$}}_{=0}. 
\end{split}
\end{equation}
Therefore $X_{i_0}(f)=0$, by Poincar\'e recurrence. It follows that~$X_i(f)=0$
for all~$i$ and hence~$f$ is locally constant. 
This already implies that the real vector space spanned by  
the locally defined frame
$T$, $X_1,\ldots,X_m$ is closed under commutators, and hence
$M$ is locally isometric 
to a Lie group with a left-invariant metric, due to Lie's Third Theorem. 

We identify the Lie group. 
Note that, going along directions 
other than $\N_{-1}$, the frame $X_1,\ldots,X_m$ is defined up to 
a transformation $\left(\begin{smallmatrix}P&0\\ 0&Q\end{smallmatrix}\right)$,
where $P\in O(m/2)$ (resp.~$Q\in O(m/2)$) acts on the $+1$-eigenspace 
of $C$ (resp.~$-1$-eigenspace of $C$). On one hand, since we have shown the 
entries of $C$ to be locally constant, the matrix~(\ref{C}) 
has~$D$ constant. On the other hand, (\ref{C}) satisfies
\[ \left(\begin{matrix}P(t)&0\\0&Q(t)\end{matrix}\right)\left(\begin{matrix}I&0\\D&-I\end{matrix}\right)\left(\begin{matrix}P(t)&0\\0&Q(t)\end{matrix}\right)^{-1}=\left(\begin{matrix}I&0\\D&-I\end{matrix}\right) \]
for all smooth curves $P(t)$, $Q(t)\in SO(m/2)$, with $P(0)=Q(0)=I_{m/2}$,
along a smooth curve $c:(-\epsilon,\epsilon)\to M$ with $c(0)=p$, 
$c'(t)\not\in\N_{-1}|_{\gamma(t)}$. 
Hence $D=0$, as $m/2\geq2$. 
This, together with the vanishing of the Christofell 
symbols $\Gamma_{ij}^k$ for $i$, $j$, $k=1,\ldots,m$, means 
the Lie group is $G=\R\ltimes_{e^C}\R^m$, where 
$C=\left(\begin{smallmatrix}I_{m/2}&0\\0&-I_{m/2}\end{smallmatrix}\right)$. 

Since $M$ is complete, its universal covering is isometric to $G$.
Finally, we show that~$G$
indeed admits quotients of finite volume, using
Filipkiewicz's criterion (Lemma~\ref{lem:filip}). 
In fact 
\[ \exp\left(\left(\log\frac{3+\sqrt5}2\right)C\right)=\frac12\begin{pmatrix}(3+\sqrt5)I_{m/2}&0\\0&(3-\sqrt5)I_{m/2}\end{pmatrix} \]
is conjugate to 
\[ \begin{pmatrix}3&& & -1&&\\ 
                  &\ddots&  & &\ddots & \\
                  && 3  & && -1 \\
                  1&& & 0&&\\ 
                  &\ddots&  & &\ddots & \\
                  && 1  & && 0 \end{pmatrix}. \]
This finishes the proof
of Theorem~\ref{nul1}.

\section{Bracket-generation of the $\kappa$-conullity distribution}

Let $M$ be a connected complete Riemannian manifold
with nontrivial $\kappa$-nullity distribution $\N_\kappa$
of constant rank, for some fixed $\kappa\geq0$. 
In this section, we revisit some results related to the
following question: When can two points in $M$ be joined by a
piecewise smooth curve always orthogonal to the $\kappa$-nullity
distribution? An answer is given by Theorem~\ref{v-dov}, which we
now prove.

\subsection{The case $\kappa>0$}

This is part~(a) of the theorem and the answer is easy. By Corollary~\ref{bg2},
$\D=\N^\perp_\kappa$ is bracket generating of step~$2$, and hence,
owing to the Chow-Rashevskii theorem~\cite[Thm.~2.1.2]{montgomeryBook}, any two points
in $M$ can be joined by a piecewise smooth curve which is tangent
to $\D$ at smooth points. 

\subsection{The case $\kappa=0$}

Next we deal with part~(b). 
We put $\D=\N^\perp_0$ and
recall the distributions $\D^r$ introduced in~(\ref{D}).
We also set $\mathcal E^r=(\D^r)^{\perp}$ for $r\geq1$. 

\begin{lem}\label{e2}
  Let $T\in\mathcal E^2$, $X\in\D$ and $Y\in TM$. Then
  $\langle \nabla_YT,X\rangle = 0 $.
\end{lem}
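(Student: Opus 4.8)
The plan is to reduce the whole statement to showing that the splitting tensor $C_T$ of $\N_0$ vanishes whenever $T\in\mathcal E^2$. First I would unpack the two pieces of information carried by the hypothesis $T\in\mathcal E^2=(\D^2)^\perp$. Since $\D\subset\D^2$, we get $\mathcal E^2\subset\D^\perp=\N_0$, so $T$ is a nullity vector; and since $[X_1,X_2]\in\D^2$ for all $X_1,X_2\in\D$, we get $\langle T,[X_1,X_2]\rangle=0$.

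Next I would split an arbitrary $Y\in TM$ as $Y=Y^h+Y^v$ with $Y^h\in\D$ and $Y^v\in\N_0$, and treat the two components separately. The vertical component is immediate: because $\nu_0$ is constant, $\N_0$ is autoparallel, so $\nabla_{Y^v}T\in\Gamma(\N_0)=\Gamma(\D^\perp)$, whence $\langle\nabla_{Y^v}T,X\rangle=0$ for every $X\in\D$. For the horizontal component, by the very definition of the splitting tensor, $\langle\nabla_{Y^h}T,X\rangle=\langle(\nabla_{Y^h}T)^h,X\rangle=-\langle C_TY^h,X\rangle$, so it suffices to prove $C_T=0$.

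The main point, and the place where both halves of the $\mathcal E^2$ hypothesis enter, is the vanishing of $C_T$. On one hand, the symmetry computation~(\ref{symm}) gives $\langle C_TX_1,X_2\rangle-\langle X_1,C_TX_2\rangle=\langle T,[X_1,X_2]\rangle=0$ for all $X_1,X_2\in\D$, so $C_T$ is a \emph{symmetric} endomorphism of $\D$ and is therefore diagonalizable over $\R$. On the other hand, fixing $p$ with $T=T_p\neq0$ and taking a complete unit-speed $0$-nullity geodesic $\gamma$ with $\gamma'(0)=T/\|T\|$ (such a geodesic exists since $M$ is complete, and it remains in a leaf because $\N_0$ is autoparallel with complete totally geodesic leaves), Corollary~\ref{real-evs}(b) with $\kappa=0$ forces every real eigenvalue of $C_{\gamma'(0)}$, hence of the proportional operator $C_T$, to vanish. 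A diagonalizable operator all of whose eigenvalues are zero is zero, so $C_T=0$.

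Combining the two cases and using linearity in $Y$ then yields $\langle\nabla_YT,X\rangle=0$, as desired. I expect the only real subtlety to be precisely the juxtaposition of these two facts about $C_T$: membership of $T$ in $\mathcal E^2$, rather than merely in $\N_0=\mathcal E^1$, is exactly what upgrades ``$C_T$ has no nonzero real eigenvalue'' to ``$C_T$ is the zero operator,'' and this is the crux of the argument.
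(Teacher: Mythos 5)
Your proposal is correct and follows essentially the same route as the paper: the symmetry of $C_T$ from the computation~(\ref{symm}) combined with Corollary~\ref{real-evs}(b) to force $C_T=0$, then the decomposition $Y=Y^h+Y^v$ together with the autoparallelness of $\N_0$. Your additional care in invoking a \emph{complete} nullity geodesic (needed since Corollary~\ref{real-evs} requires $b=\infty$) and in handling the normalization of $T$ only makes explicit what the paper leaves implicit from its standing completeness and constant-rank assumptions.
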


\begin{proof}
  Since $T\perp\D^2$, the calculation~(\ref{symm}) says that 
  $C_T$ is a symmetric endomorphism of $\D$ and hence all of its eigenvalues
  are real. Now  Corollary~\ref{real-evs} implies that $C_T\equiv0$.
  Finally, we decompose $Y=Y^h+Y^v$ according to
  $TM=\D\oplus\N_0$ and recall that $\N_0$ is totally geodesic to obtain
  \[ \langle \nabla_YT,X\rangle = -\langle C_TY^h,X\rangle +
    \langle \nabla_{Y^v}T,X\rangle = 0, \] 
  as wished.
  \end{proof}

\begin{lem}\label{er}
 Let $T\in\mathcal E^{r+1}$, $Y\in\D^r$ and $X\in\D$, for some $r\geq1$. Then
  $\langle \nabla_XT,Y\rangle = 0 $.
\end{lem}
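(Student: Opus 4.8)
The plan is to induct on $r$. The base case $r=1$ is precisely Lemma~\ref{e2}, applied with the roles of its $X$ and $Y$ interchanged: for $T\in\mathcal E^2$, $X\in\D$ and $Y\in\D^1=\D$ it gives $\langle\nabla_XT,Y\rangle=0$. Throughout I will use three structural facts, all consequences of the chain of inclusions $\mathcal E^{r+1}\subseteq\mathcal E^2\subseteq\mathcal E^1=\N_0$: since $T\in\N_0$ we have $R(\cdot,\cdot)T=0$; since $T\in\mathcal E^2$ the proof of Lemma~\ref{e2} gives $C_T=0$, so that $\nabla_XT\in\N_0$ for every $X\in\D$; and $\N_0$ is totally geodesic. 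I will also read the induction hypothesis in the form \emph{a horizontal derivative lowers the $\mathcal E$-index by at most one}, i.e.\ $\nabla_XT'\in\mathcal E^{r-1}$ whenever $X\in\D$ and $T'\in\mathcal E^r$.

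For the inductive step, fix $T\in\mathcal E^{r+1}\subseteq\mathcal E^r$ and $X\in\D$. The induction hypothesis applied to $T$ already yields $\nabla_XT\in\mathcal E^{r-1}$, hence $\langle\nabla_XT,Y\rangle=0$ for all $Y\in\D^{r-1}$. By~(\ref{D}) we have $\D^r=\D^{r-1}+[\D,\D^{r-1}]$, so it remains to prove $\langle\nabla_XT,[Z,W]\rangle=0$ for $Z\in\D$ and $W\in\D^{r-1}$.

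To do this I would combine the symmetry of $\nabla$ with the curvature relation $R(Z,W)T=0$. Since $T\perp\D^r$ and $T\perp\D^{r+1}$ one checks $\langle\nabla_XT,[Z,W]\rangle=\langle\nabla_{[Z,W]}T,X\rangle$, and $R(Z,W)T=0$ rewrites $\nabla_{[Z,W]}T=\nabla_Z\nabla_WT-\nabla_W\nabla_ZT$. Expanding by metric compatibility, and using that $\nabla_ZT\in\mathcal E^{r-1}$ (induction hypothesis) and $\nabla_WT\in\N_0$ (which itself follows from the hypothesis together with the symmetry of $\nabla$), together with $C_T=0$ to discard horizontal components, the whole expression collapses to the two residual terms
\begin{equation*}
\langle\nabla_XT,[Z,W]\rangle=-\langle\nabla_WT,(\nabla_ZX)^v\rangle+\langle\nabla_ZT,(\nabla_WX)^v\rangle,
\end{equation*}
where $(\cdot)^v$ denotes the $\N_0$-component. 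Here $(\nabla_ZX)^v\in\N_0\cap\D^2$ because $C_S=0$ for $S\in\mathcal E^2$, while $\nabla_ZT\in\mathcal E^{r-1}$ and $\nabla_WT\in\N_0$. To annihilate each term I would apply the symmetry of $\nabla$ once more, trading the derivative onto the vertical factor and using the inclusions $[\D^a,\D^b]\subseteq\D^{a+b}$ to place the relevant brackets inside $\D^{r+1}$ (where $T$ vanishes) or inside $\D^{r-1}$ (where $\nabla_ZT$ vanishes), invoking the total geodesy of $\N_0$ for the remaining vertical derivatives.

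The main obstacle is exactly this last step: the bookkeeping of second covariant derivatives against the variable-rank filtration $\{\D^s\}$. The derivative directions $Z$ and $W$ are not horizontal, so the induction hypothesis cannot be applied to them directly, and one is forced to repeatedly trade a covariant derivative for a Lie bracket --- raising the filtration index by one each time via $[\D,\D^s]\subseteq\D^{s+1}$ --- until every surviving term either pairs a vector of $\N_0$ with an element of $\D^{r-1}$ or contracts $T$ against $\D^{r+1}$. Keeping $C_T=0$ and the total geodesy of $\N_0$ in force at each stage is what permits the horizontal components to be dropped and the induction to close; making this reduction precise, and in particular checking that the index arithmetic already works at the first nontrivial level $r=2$, is where the real care is needed.
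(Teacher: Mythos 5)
Your induction is set up correctly, and your identity $\langle\nabla_XT,[Z,W]\rangle=\langle\nabla_{[Z,W]}T,X\rangle$ (for $Z\in\D$, $W\in\D^{r-1}$) is a correct consequence of $T\perp\D^{r}$, $T\perp\D^{r+1}$ and torsion-freeness. But the proof does not close, and the gap is exactly where you locate it: the two residual terms $-\langle\nabla_WT,(\nabla_ZX)^v\rangle$ and $\langle\nabla_ZT,(\nabla_WX)^v\rangle$ are never shown to vanish, and the tools you allow yourself cannot kill them term by term. Already at the first nontrivial level $r=2$, the induction hypothesis only gives $\nabla_ZT\in\mathcal E^{1}=\N_0$, which carries no information beyond Lemma~\ref{e2}, while $(\nabla_WX)^v$ (with $W,X\in\D$) is an essentially arbitrary element of $\N_0\cap\D^2$, measuring the non-integrability of $\D$ and nonzero in all interesting cases; concluding $\nabla_ZT\perp\D^2$ at that stage would be circular, since that is precisely the statement being proved at level $r=2$. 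Moreover the inclusion $[\D^a,\D^b]\subseteq\D^{a+b}$ you invoke is nowhere established in the paper (only $[\D,\D^r]\subseteq\D^{r+1}$ is, by definition). What is true is that the \emph{sum} of the two residual terms vanishes --- but not for the reasons you pursue.

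The frustrating part is that you had already won one step earlier: Lemma~\ref{e2} permits an \emph{arbitrary} differentiating direction $Y\in TM$, in particular $Y=[Z,W]$, so $\langle\nabla_{[Z,W]}T,X\rangle=0$ on the spot (using $T\in\mathcal E^{r+1}\subseteq\mathcal E^2$ and $X\in\D$); expanding it further via $R(Z,W)T=0$ only buried this. In fact the whole induction is unnecessary, and this is what the paper does: for arbitrary $Y\in\D^r$, metric compatibility gives $\langle\nabla_XT,Y\rangle=-\langle T,\nabla_XY\rangle$; adding the term $-\langle\nabla_YT,X\rangle$, which is zero by Lemma~\ref{e2}, turns this into $\langle T,\nabla_YX-\nabla_XY\rangle=\langle T,[Y,X]\rangle$, which vanishes because $[Y,X]\in\D^{r+1}$ and $T\in\mathcal E^{r+1}$. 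So the correct mechanism is the one you used on the bracket part of $\D^r$, applied directly to all of $\D^r$ --- no splitting via~(\ref{D}), no curvature identity, and no second covariant derivatives are needed.
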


\begin{proof}
We compute
\begin{align*}
\langle \nabla_XT,Y \rangle & = -\langle T,\nabla_XY\rangle \\
& = -\langle T,\nabla_XY\rangle -\langle \nabla_YT,X\rangle\qquad \text{(by Lemma~\ref{e2})} \\
& = \langle T,[Y,X]\rangle \\
&= 0 \qquad\text{(since $[Y,X]\in\mathcal D^{r+1}$),} 
\end{align*}
as desired.
\end{proof}

For a point $p\in M$, consider the integers $n_i(p)=\dim\D^i|_p$ and note that 
the non-decreasing sequence $n_1(p)$, $n_2(p),\ldots$ obviously stabilizes,
say at $r=r(p)$. The \emph{growth vector} 
of $\D$ at $p\in M$ is the integer list 
$(n_1(p),\ldots,n_r(p))$. The distribution $\D$ is called
\emph{regular} at $p$ if the growth vector is locally constant at $p$. 
The subset $M_{reg}$ of regular points for $\D$ is open and dense, and consists
precisely of the points of $M$ where all the $\D^i$'s are 
genuine distributions. Since the growth vector is a lower semicontinuous
function, $M_{reg}$ in particular contains the open set where the 
growth vector is maximal.

Let $\gamma$ be a nullity geodesic. Since $\D$ is parallel along $\gamma$ 
and the parallel transport along $\gamma$ is an isometry, we deduce that 
the growth vector is constant along $\gamma$. It follows that the 
connected components of $M_{reg}$ are foliated by the leaves of $\N_0$. 
Fix such a component, say $U$, with growth vector $(n_1,\ldots,n_r)$.

Note that $\mathcal E^{n_r}\neq0$ if and only if 
$\D$ is not bracket-generating on $U$. 
  By the argument above, $\mathcal E^{n_r}$ is parallel
along a nullity geodesic. 
Moreover $\mathcal E^{n_r+1}=\mathcal E^{n_r}$ so, 
owing to Lemma~\ref{er}, $\nabla_XT\in\mathcal E^{n_r}$
for all $X\in\D$ and $T\in\mathcal E^{n_r}$. 
We have shown that $\mathcal E^{n_r}$ is a parallel
distribution in $M$. Note that the leaves of $\mathcal E^{n_r}$ are 
isometric to a flat Euclidean space $\R^s$ for some $s\geq0$ 
($s=0$ corresponds to the case in which $\D$ is bracket-generating in~$U$). 
By the de Rham decomposition theorem and the
Chow-Rashevskii theorem~\cite[Thm.~2.1.2]{montgomeryBook},
and shrinking~$U$ if necessary, 
$U$ splits as a Riemannian product $U_0\times\R^s$, where $s\geq0$
(compare~\cite[Prop.~5.2 of Ch.~IV]{KN}), and 
any two points of $U_0\times\{0\}$ can be joined by a piecewise smooth curve 
in~$U_0\times\{0\}$ tangent to $\D|_{U_0\times\{0\}}$ at smooth points.

If $M$ is homogeneous and simply-connected then $U=M$.
The irreducibility of $M$ implies $s=0$. This proves~(b)
and finishes the proof of the theorem. 

\newcommand{\etalchar}[1]{$^{#1}$}
\providecommand{\bysame}{\leavevmode\hbox to3em{\hrulefill}\thinspace}
\providecommand{\MR}{\relax\ifhmode\unskip\space\fi MR }
\providecommand{\MRhref}[2]{%
  \href{http://www.ams.org/mathscinet-getitem?mr=#1}{#2}
}
\providecommand{\href}[2]{#2}

                                                  \end{document}